\def\C{{\mathbb C}}
\def\D{{\mathbb D}}
\def\R{{\mathbb R}}
\def\S{{\mathbb S}}
\def\Z{{\mathbb Z}}
\def\dvf#1{{\underrightarrow{#1}}} % dual vector field for a 1-form
\def\wt{\widetilde}
\def\wh{\widehat}
\def\ol{\overline}
\def\ur#1{\upshape{#1}}
\def\itemup#1{\item[\textup{#1}]}
\def\ch{\check}
\def\map{\colon}
\def\del{\partial}
\def\eps{\varepsilon}
\def\rst#1{{\upharpoonright}_{\!#1}}
\def\with{{\mathrel{:}}}
\def\id{\mathrm{id}}
\def\hook{\mathbin{\lrcorner}}
\def\Step#1){\noindent\hbox to \parindent{#1)\hfill}}
\theoremstyle{plain}
\newtheorem{theorem}{Theorem}
\newtheorem{corollary}[theorem]{Corollary}
\newtheorem{proposition}[theorem]{Proposition}
\newtheorem{lemma}[theorem]{Lemma}
\theoremstyle{definition}
\newtheorem{definition}[theorem]{Definition}
\newtheorem{example}[theorem]{Example}
\theoremstyle{remark}
\newtheorem*{acknowledgments}{Acknowledgments}
\newtheorem*{remark*}{Remark}
\newtheorem{remark}[theorem]{Remark}
\def\DD{{\mathcal{D}}}
\def\DDc{{\mathcal{D}_\del}}
\DeclareMathOperator{\MT}{MT}
\DeclareMathOperator{\OB}{OB}
\DeclareMathOperator{\Ker}{Ker}
\DeclareMathOperator{\Int}{Int}
\DeclareMathOperator{\MCG}{MCG}
\begin{document}

\title{IDEAL LIOUVILLE DOMAINS\\ A cool gadget}

\author{Emmanuel \textsc{Giroux}\thanks%
{\emph{Centre National de la Recherche Scientifique} (UMI 3457 CRM) and \emph
{Universit\'e de Montr\'eal}. Partially supported by the ANR grant \emph{MICROLOCAL}
(ANR-15CE40-0006).}}

\date{Montreal --- August 2017}

\maketitle

The purpose of these notes is to describe a convenient packaging for those
objects nowadays called Liouville domains but which have been studied formerly
under various names such as ``symplectic manifolds with restricted contact type
boundaries'' or ``complete convex symplectic manifolds with conical/cylindrical
ends'' \cite{We, EG, Mc, Ge, La, CFH, Vi, Se, CE}. In this text, the term \emph
{domain} systematically refers to a compact manifold with boundary.

\begin{definition}[Liouville Domains] \label{d:ld}
A \emph{Liouville domain} is a domain $F$ endowed with a \emph{Liouville form},
that is, a $1$-form $\lambda$ satisfying the following two axioms:
\begin{itemize}
\item
$\omega := d\lambda$ is a symplectic form on $F$.
\item
$\lambda$ induces a contact form on $K := \del F$ orienting $K$ as the boundary
of $(F,\omega)$.
\end{itemize}
\end{definition}

Liouville domains are ubiquitous in symplectic geometry. Obvious examples are 
starshaped tubes about the zero-section in cotangent spaces of closed manifolds.
More generally, Stein domains are fundamental examples. In addition, it follows
from Donaldson's work \cite{Do} that every closed integral symplectic manifold
can be obtained from a Liouville domain (whose Reeb flow on the boundary defines
a free circle action) by a ``symplectic reduction'' crashing the boundary to its
quotient by the circle action. Similarly, every closed contact manifold can be
constructed from a Liouville domain as the ``relative mapping torus'' of some
symplectic self-diffeomorphism fixing the boundary points \cite{Gi1}.

On each Liouville domain, there is a wide (open) choice of Liouville forms. In
particular, any Liouville form $\lambda$ has many multiples $e^f \lambda$ which
are Liouville forms as well: The pertinent condition on the function $f$ is that
$\dvf\lambda \cdot f > -1$, the \emph{Liouville vector field} $\dvf\lambda$
being given by $\dvf\lambda \hook d\lambda = \lambda$. By the second axiom of
Definition \ref{d:ld}, $\dvf\lambda$ points transversely outwards along $\del F$
(because $\dvf\lambda \hook (d\lambda)^n = n\,\lambda \wedge (d\lambda)^{n-1}$),
so the inequality $\dvf\lambda > -1$ admits a big convex set of solutions~$f$.
However, these rescaled Liouville forms share important geometric features: They
determine the same contact structure $\xi$ (with all possible contact forms) on
$\del F$, the singular foliations spanned by their Liouville fields coincide,
and the symplectic structures they define on $F$ are the same up to completion
(and sliding in the direction of the Liouville fields). The \emph{completion} of
a Liouville domain $(F,\lambda)$ is an open manifold $\wh F \supset F$ equipped
with a $1$-form $\wh\lambda$ such that:
\begin{itemize}
\item
$\wh\omega := d\wh\lambda$ is a symplectic form on $\wh F$,
\item
$\wh\lambda \rst F$ equals $\lambda$, and
\item
$\dvf\lambda$ is a complete vector field whose flow induces a diffeomorphism
from $\del F \times \R_{\ge0}$ to $\wh F - \Int F$.
\end{itemize}
It is easy to check that such a completion always exists and is unique up to
symplectomorphism. More precisely, between any two completions of the same domain
$(F,\lambda)$, there is a unique diffeomorphism which is the identity on $F$ and
conjugates the extended Liouville forms. 

The completion $(\wh F, \wh\lambda)$ offers an alternative description of the
contact manifold $(\del F, \xi)$ (with no preferred contact form) as the orbit
space of $\dvf\lambda$ at infinity. In \cite{EKP}, this orbit space is called
the \emph{ideal contact boundary} of $(\wh F, \wh\lambda)$. A natural question
then arises: Does this ideal contact boundary really depend on the form $\wh
\lambda$ (with well-behaved dual vector field) or only on the symplectic form
$\wh\omega := d\wh\lambda$? This question was studied by S.~Courte in his thesis
and he exhibited examples of completions with isomorphic symplectic structures
but non-diffeomorphic ideal contact boundaries \cite{Co1, Co2}. The observation
leading to the concept of ideal Liouville domains is that this ambiguity about
the ideal contact boundary can be lifted by fixing a smooth compactification of
the completion ``taming'' the symplectic structure. Ideal Liouville domains are
domains with a symplectic structure in the interior (subject to some ``tameness 
condition'' near the boundary) which uniquely determines a contact structure on
the boundary. They have an affine space of (ideal) Liouville forms (and none of
them being part of the data) whose dual vector fields are complete and hit the
boundary transversely. They enjoy the stability that is expected from symplectic
objects (for the suitable topology) and they can be manipulated (modified and
combined) by means of various operations, notably products (without corners),
handle attachments (in particular, boundary connect sums), and plumbings (along
proper Lagrangian disks with Legendrian boundaries). They are also very useful
to understand and describe the relationships between contact structures and open
books, and they were actually introduced in this context. The second half of the
paper is devoted to the contact aspects. After discussing subtleties related to
the monodromy of open books, we define and study Liouville open books. We prove
that every open book supporting a contact structure is a Liouville open book and
that every Liouville open book supports an essentially unique contact structure.

\begin{acknowledgments}
I am indebted to Robert Roussarie for his nice observation reproduced in Remark
\ref{r:smooth}. I also wish to thank Sylvain Courte for his helpful comments on
the preliminary version of this text he used to write his thesis \cite{Co2}.
Finally, I am grateful to Patrick Massot and Klaus Niederkr\"uger for adopting and
advertising the notion of ideal Liouville domains, and for pushing me to write
these notes.
\end{acknowledgments}

\subsection{Ideal Liouville domains in their own}

\begin{definition}[Ideal Liouville Domains] \label{d:ild}
An \emph{ideal Liouville domain} $(F,\omega)$ is a domain $F$ endowed with an
ideal Liouville structure $\omega$. This \emph{ideal Liouville structure} is an
exact symplectic form on $\Int F$ admitting a primitive $\lambda$ such that: For
some (and then any) function $u \map F \to \R_{\ge0}$ with regular level set
$\del F = \{u=0\}$, the product $u \lambda$ extends to a smooth $1$-form on $F$
which induces a contact form on $\del F$.

A $1$-form $\lambda$ as above is called an \emph{ideal Liouville form}. Its dual
vector field $\dvf\lambda$ is an \emph{ideal Liouville field}.
\end{definition} 

Liouville forms in ideal Liouville domains are analogous to contact forms on
contact manifolds: They exist, and it is sometimes useful to choose one, but
this choice is most often unimportant. In this analogy, ideal Liouville fields
correspond to Reeb fields though their dynamics are very different: The latter
are Hamiltonian while the former expand the symplectic form ---~and hence the
volume~--- exponentially. Speaking of contact manifolds, one of the striking
features of ideal Liouville domains is:

\begin{proposition}[The Boundary Contact Structure] \label{p:contact}
Let $(F,\omega)$ be an ideal Liouville domain. Then the boundary $K := \del F$
has a positive contact structure $\xi$, uniquely determined by $\omega$, which
is left invariant by any diffeomorphism of $F$ preserving $\omega$. Moreover,
the positive equations of $\xi$ are in one-to-one correspondence with the
negative sections of the conormal bundle of~$K$. 
\end{proposition}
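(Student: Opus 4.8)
The plan is to read the contact structure off a renormalization of $\omega$ near $K$. Fix an ideal Liouville form $\lambda$ and a boundary defining function $u$ as in Definition~\ref{d:ild}, write $\beta := u\lambda$ for the resulting smooth $1$-form on $F$, let $\iota\map K\hookrightarrow F$ denote the inclusion, and set $\alpha := \iota^*\beta$, the contact form on $K$ furnished by the definition. The point is that although $\omega$ blows up along $K$, the $2$-form $u^2\omega$ is tame: from $\lambda = \beta/u$ one gets $u^2\omega = u^2\,d\lambda = u\,d\beta - du\wedge\beta$, which is smooth on all of $F$. I would therefore set $\Omega := u^2\omega$, observe that its extension across $K$ is the \emph{unique} smooth extension of a form already prescribed on the dense subset $\Int F$ and hence depends on $\omega$ and $u$ alone, and compute its value along $K$. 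Since $u\,d\beta$ vanishes on $K$ and $du\wedge du = 0$, the restriction of $\Omega$ to the boundary points is $\Omega\rst K = -\,du\wedge\beta = -\,du\wedge\alpha$ (the last equality viewing $\alpha$ as any covector restricting to it, the wedge with $du$ being insensitive to the choice).

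The first main step is the pointwise identity $\ker\Omega_p = \xi_p$ for $p\in K$, where $\xi := \ker\alpha$. Indeed $v\hook(-du_p\wedge\beta_p) = -\,du_p(v)\,\beta_p + \beta_p(v)\,du_p$ vanishes exactly when $du_p(v) = 0$ and $\beta_p(v) = 0$, the covectors $du_p$ and $\beta_p$ being independent precisely because $\alpha=\iota^*\beta$ is nonvanishing; this says $v\in T_pK\cap\ker\alpha = \xi_p$. Thus $\xi = \ker(\Omega\rst K)$ is manufactured from $\omega$ alone, which yields both the existence of $\xi$ and the uniqueness clause. Independence of the auxiliary $u$ is then immediate: replacing $u$ by $\phi u$ with $\phi>0$ multiplies $\Omega$ by $\phi^2$, hence scales $\Omega\rst K$ by the positive number $(\phi\rst K)^2$ without changing its kernel. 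To fix the coorientation and confirm that $\xi$ is a \emph{positive} contact structure I would expand $\omega^n = u^{-2n}\Omega^n$; using $\Omega = -du\wedge\beta + u\,d\beta$ and the fact that the decomposable piece $-du\wedge\beta$ has vanishing wedge square, the term of lowest order in $u$ is $-n\,u^{n-1}\,du\wedge\beta\wedge(d\beta)^{n-1}$, whose restriction to $K$ is $du\wedge\alpha\wedge(d\alpha)^{n-1}$; matching signs with $\omega^n$ shows that $\alpha$ is a positive contact form for the boundary orientation.

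Invariance under symplectomorphisms is then soft. If $\psi\map F\to F$ satisfies $\psi^*\omega = \omega$, it preserves $K$, so $u' := u\circ\psi$ is again a defining function and $\psi^*\Omega = (u')^2\omega =: \Omega'$. Naturality of kernels under the linear isomorphisms $d\psi_p$ gives $d\psi_p(\ker\Omega'_p) = \ker\Omega_{\psi(p)}$, while the $u$-independence just proven yields $\ker(\Omega'\rst K) = \xi = \ker(\Omega\rst K)$; hence $d\psi$ carries $\xi$ to $\xi$, i.e.\ $\psi\rst K$ preserves $\xi$.

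For the last assertion I would read the boundary identity as a factorization $\Omega\rst K = (-\,du\rst K)\wedge\alpha$; given $u$ this determines a unique $1$-form $\alpha = \alpha_u$ on $K$, depending only on $\omega$ and on $du\rst K$, which by the kernel computation is a positive equation of $\xi$. Setting $\sigma := -\,du\rst K \in \Gamma(N^*K)$, I note that since $u\ge0$ vanishes on $K$ the covector $du\rst K$ is positive on inward-pointing vectors, so $\sigma$ is a \emph{negative} section of the conormal bundle. Fixing a reference $u_0$, one has $\alpha_u = (\phi\rst K)\,\alpha_{u_0}$ and $\sigma = (\phi\rst K)\,\sigma_{u_0}$, and as $u$ varies $\phi\rst K$ runs over all positive functions on $K$; thus $\alpha_u$ sweeps out all positive equations of $\xi$ and $\sigma$ all negative conormal sections, the two families matched by the common positive factor. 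This is the asserted one-to-one correspondence, characterized intrinsically by the pairing $\Omega\rst K = \sigma\wedge\alpha$. I expect the only real obstacle to lie in the boundary analysis of the first two paragraphs --- checking that $u^2\omega$ genuinely extends smoothly and isolating its leading $u$-behaviour --- together with the orientation bookkeeping that turns ``positive contact form'' into ``negative conormal section''; once $\Omega\rst K = -du\wedge\alpha$ is established, uniqueness, naturality, and the bijection all follow formally.
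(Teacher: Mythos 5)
Your argument is correct and follows essentially the same route as the paper's proof: extend $u^2\omega$ to a smooth $2$-form on $F$, observe that along $K$ it reduces to the decomposable form $\beta\wedge du$, identify $\xi$ as its kernel (hence independent of $\lambda$ and, up to conformal factor, of $u$), and read the bijection between positive equations and conormal sections off this factorization. The only cosmetic discrepancy is a sign convention in the last step: since $u\ge0$ vanishes on $K$, with the usual outward-normal coorientation it is $du\rst K$ itself, rather than $-\,du\rst K$, that plays the role of the \emph{negative} conormal section paired with the positive equation $\alpha$, but this does not affect the bijection.
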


As a consequence of the latter claim, every symplectic diffeomorphism $\phi$ of 
$(F,\omega)$ (meaning a diffeomorphism of $F$ whose restriction to the interior
preserves $\omega$) which is relative to the boundary (in the sense that its
restriction to $\del F$ is the identity) is actually tangent to the identity at
all points of $\del F$. Indeed, $\phi$ preserves the equations of the boundary
contact structure and hence acts trivially on the conormal bundle of $\del F$.  

\begin{proof}
Let $\lambda$ be a Liouville form and $u \map F \to \R_{\ge0}$ a function with
regular level set $K = \{u=0\}$. By assumption, $u \lambda$ extends to a smooth
$1$-form $\beta$ on $F$ which induces a contact form on $K$. Write
$$ \omega = d\lambda = d(\beta/u)
 = u^{-2} (u\, d\beta + \beta \wedge du). $$
This formula demonstrates that $u^2\omega$ extends to a smooth $2$-form $\gamma$
%% eg - The $n$th power of this formula also shows that the contact form $\beta
%% \rst K$ has to be positive.
on $F$ which depends on $u$ only up to a conformal factor. Now, along $K$, the
form $\gamma$ has rank $2$, and its kernel is the contact structure $\xi$ on $K$
defined by $\beta$. It follows that $\xi$ is independent of the choice of
$\lambda$ and~$u$. Moreover, the identity $\gamma = \beta \wedge du$ shows that
$\beta \rst K$ is also independent of $\lambda$ and is uniquely (and pointwise
linearly) determined by $du$ viewed as a section of the conormal bundle of $K$. 
\end{proof}

Now recall that the symplectization of a contact manifold $(K,\xi)$ is the 
symplectic submanifold $SK$ of $T^*K$ consisting of non-zero covectors $\beta_p
\in T^*_pK$, $p \in K$, whose cooriented kernel is $\xi_p$ (contact structures 
are cooriented in this text). We denote by $\lambda_\xi$ the $1$-form induced on
$SK$ by the canonical Liouville form of $T^*K$. We also define the ``projective
completion'' of $SK$ as the quotient
$$ \ol{SK} := (SK \times \R_{\ge0}) \big/ \R_{>0} $$
where $\R_{>0}$ acts (freely, properly and) diagonally by multiplication. Thus,
$\ol{SK}$ is a smooth manifold with boundary obtained by attaching a copy of
$K = SK/\R_{>0}$ to $SK = (SK \times \R_{>0}) / \R_{>0}$.

\begin{proposition}[Ideal Liouville Fields] \label{p:lioufields}
Let $(F,\omega)$ be an ideal Liouville domain, $(K,\xi)$ its contact boundary,
and $\lambda$ an ideal Liouville form in $\Int F$.

\itemup{a)}
The Liouville field $\dvf\lambda$ is complete and the singular foliation spanned
by $\dvf\lambda$ extends to a foliation of $F$ which is non-singular along $K$
and transverse to $K$. We denote by $U$ the open collar neighborhood of $K$
consisting of all extended leaves reaching $K$.

\itemup{b)}
There exists a unique embedding $\iota = \iota_\lambda \map \ol{SK} \to F$ such
that $\iota \rst K = \id$ and $\iota^*\lambda = \lambda_\xi$; its image is the
open collar neighborhood $U$.
\end{proposition}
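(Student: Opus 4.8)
My plan is to reduce everything to understanding the Liouville field $\dvf\lambda$ near $K$, and then transport the standard symplectization picture. Keep the notation of the proof of Proposition \ref{p:contact}: $\beta = u\lambda$ is smooth on $F$, restricts to a contact form on $K$, and $\gamma := u^2\omega$ is a smooth $2$-form with $\gamma\rst K = \beta\wedge du$ of rank $2$ and kernel $\xi$. Rewriting the defining relation $\dvf\lambda\hook d\lambda = \lambda$ in terms of these smooth objects gives $\dvf\lambda\hook\gamma = u\beta$ on $\Int F$, an equation with smooth data on all of $F$.

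The technical heart is to show that $\dvf\lambda = u\,\wt Z$ where $\wt Z$ extends to a smooth vector field on $F$, nowhere vanishing and transverse to $K$ near $K$. To see this I would work in a collar $K\times[0,\epsilon)$ on which $u$ is the normal coordinate, expand $\beta = g\,du + \eta_u$ with $\eta_0$ the induced contact form, and solve the linear system $\dvf\lambda\hook\gamma = u\beta$ for $\dvf\lambda = a\,\del_u + V$ with $V$ tangent to the $K$-factor. Matching the $du$-component and the $K$-component and using that $d\eta_0$ is nondegenerate on $\xi=\ker\eta_0$ forces, along $K$, that $a$ and $V$ both vanish, with leading behaviour $a = -u + O(u^2)$ and $V = O(u)$; hence $\wt Z := \dvf\lambda/u$ is smooth up to $K$ with $\wt Z\rst K = -\del_u$. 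This is the step where the ideal hypothesis is essential, and where the degeneracy of $\gamma$ along $K$ makes the solvability and the genuine smoothness (beyond the formal expansion in $u$) the main obstacle; I expect to settle it by checking that the two component equations determine $a$ and $V$ as smooth functions including $u=0$.

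Granting this, part (a) follows quickly. The singular foliation spanned by $\dvf\lambda$ coincides on $\Int F$ with the one spanned by the nonvanishing field $\wt Z$, which extends across $K$ as a nonsingular foliation transverse to $K$; this defines the collar $U$ as the union of its extended leaves reaching $K$. For completeness, the normal equation reads $\dot u = -u + O(u^2)$, so trajectories approach $K$ only asymptotically and never in finite time, while outside a collar of $K$ the field lives on a compact part of $\Int F$; since $K$ is the only end of $\Int F$, no trajectory escapes in finite time and $\dvf\lambda$ is complete.

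For part (b) I would match the collar with the projective completion of the symplectization. Writing $Z_\xi = s\,\del_s$ for the Liouville field of $\lambda_\xi$ on $SK$ (so its flow scales $\lambda_\xi$ by $e^\tau$, exactly as $\dvf\lambda$ scales $\lambda$), the normal form above identifies a neighborhood of $K$ in $F$ with a neighborhood of the boundary $K = \del\,\ol{SK}$ on which $\lambda$ pulls back to $\lambda_\xi$ and $\dvf\lambda$ to $Z_\xi$; this produces the germ of $\iota$ along $K$ with $\iota\rst K = \id$ and $\iota^*\lambda = \lambda_\xi$. I then extend $\iota$ over all of $SK$ by requiring flow-equivariance $\iota\circ\psi^\tau = \phi^\tau\circ\iota$, where $\phi,\psi$ are the (complete) flows of $\dvf\lambda,Z_\xi$; since every orbit of $Z_\xi$ eventually enters the germ domain and both flows are complete, this determines $\iota$ on all of $SK$, equivariance propagates $\iota^*\lambda = \lambda_\xi$, and the image is precisely the union of leaves reaching $K$, namely $U$. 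Uniqueness is then automatic: the conditions $\iota\rst K = \id$ and $\iota^*\lambda = \lambda_\xi$ force $\iota$ to conjugate $Z_\xi$ to $\dvf\lambda$, so $\iota$ is determined on each flow line by its boundary value, hence unique.
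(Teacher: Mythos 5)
Your overall strategy is the paper's: factor $\dvf\lambda = u\cdot(\text{smooth field transverse to }K)$, deduce completeness and the extended foliation, then identify the collar with $\ol{SK}$ via the flow. But the one step you yourself call ``the technical heart'' --- that $\dvf\lambda/u$ extends smoothly up to $K$ --- is exactly where an idea is needed, and your proposal does not supply it. Solving $\dvf\lambda\hook\gamma = u\beta$ componentwise in a collar runs into the degeneracy you acknowledge: writing $\gamma = u\,d\beta + \beta\wedge du$ and $\dvf\lambda = uY$, the equation at $u=0$ reduces to $\beta(Y)\,du = (1+Y\cdot u)\,\beta$, which (since $\beta$ and $du$ are independent along $K$) only pins down $\beta(Y)=0$ and $Y\cdot u=-1$; the component of $Y$ in $\xi$ is \emph{not} determined at zeroth order, so you must go to higher order in $u$ and then still convert a formal expansion into genuine smoothness of the solution up to the boundary. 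Nothing in your sketch does this, and your assertion $\wt Z\rst K=-\del_u$ is in fact stronger than what the zeroth-order equations give. The paper sidesteps the whole issue with a single device: from $\omega^n = u^{-n-1}\mu$ with $\mu := (u\,d\beta + n\beta\wedge du)\wedge(d\beta)^{n-1}$ a genuine volume form on all of $F$, it \emph{defines} $\nu$ by $\nu\hook\mu = n\beta\wedge(d\beta)^{n-1}$ (manifestly smooth and nonvanishing, with $\nu\cdot u=-1$), and then checks in $\Int F$ that $n\beta\wedge(d\beta)^{n-1} = u^{-1}(\dvf\lambda\hook\mu)$, whence $\dvf\lambda = u\nu$. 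You should either adopt this top-power trick or actually carry out the higher-order analysis; as written, part (a) is not proved.

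Part (b) is essentially the paper's argument recast in flow-equivariant language (the paper uses the holonomy of the foliation spanned by $\nu$ to build the map $U\to\ol{SK}$, sending $p$ to the covector induced by $\lambda_p$ on $T_qK$), and your uniqueness argument matches the paper's. Two points to tighten there: the ``normal form'' you invoke controls only $\dvf\lambda$, not $\lambda$ itself, so the germ of $\iota$ with $\iota^*\lambda=\lambda_\xi$ near $K$ needs the holonomy/covector identification (which uses that the holonomy preserves $\Ker\beta$); and the fact that the flow-extended map is a diffeomorphism onto $U$ rests on the expansion property $(\phi^\tau)^*\lambda = e^\tau\lambda$, which you should state rather than leave implicit. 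These are repairable, but they all sit downstream of the unproved smoothness in (a).
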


\begin{proof}
Let $u \map F \to \R_{\ge0}$ be a function with regular level set $K = \{u=0\}$
and $\beta$ the form extending $u \lambda$ over~$F$. For $n = \frac12 \dim F$,
\begin{multline*}
   \omega^n = \bigl(d (\beta / u)\bigr)^n
 = u^{-2n} (u\, d\beta + \beta \wedge du)^n \\
 = u^{-n-1} (u\, d\beta + n \beta \wedge du) \wedge (d\beta)^{n-1}  
 = u^{-n-1} \mu 
\end{multline*}
where $\mu := (u\, d\beta + n\beta \wedge du) \wedge (d\beta)^{n-1}$ is a volume
form on~$F$.

Let $\nu$ denote the vector field on $F$ given by $\nu \hook \mu = n\beta \wedge
(d\beta)^{n-1}$. Since $\beta$ induces a positive contact form on the boundary,
$\nu$ is non-singular along $K$ and points transversely outwards (specifically,
$\nu \cdot u = -1$ by the very definition of things). On the other hand, in the
interior of $F$,
$$ n \beta \wedge (d\beta)^{n-1} 
 = n u^n \lambda \wedge (d\lambda)^{n-1}
 = u^n \dvf\lambda \hook \omega^n
 = u^{-1}\, (\dvf\lambda \hook \mu). $$
Comparing this relation with the definition of $\nu$, we get $\dvf\lambda = u
\nu$. Since $u$ vanishes along $K$, the vector field $\dvf\lambda$ is complete
and the foliation it defines extends to the foliation spanned by $\nu$. This
proves Part a).

As for Part b), first note that if the embedding $\iota$ exists then it maps the
standard Liouville field $\dvf{\lambda_\xi}$ of $SK$ to $\dvf\lambda$, and so its
image has to be $U$. Now observe that the holonomy of the foliation spanned by
$\nu$ yields a projection $U \to K$ and, for any point $p \in U-K$ projecting to
$q \in K$, identifies $\lambda_p \in T_p^*U$ with a covector in $T_q^*K$ whose
cooriented kernel equals $\xi_q$ (just because the holonomy preserves the kernel
of $\beta = u\lambda$). Thus, we have a smooth map $U \to \ol{SK}$ which is the
identity on $K$. The expansion properties of the flow of $\dvf\lambda$ imply
that this map is a diffeomorphism, and we define $\iota$ to be the inverse map.
The relation $\iota^*\lambda = \lambda_\xi$ follows from the very definition of
$\lambda_\xi$, and $\iota$ is unique because the identity of $K$ lifts to a
unique diffeomorphism of $SK$ preserving $\lambda_\xi$.
\end{proof}

\begin{corollary}[Ideal Liouville Forms] \label{c:liouforms} 
On any ideal Liouville domain $(F,\omega)$, ideal Liouville forms constitute an
affine space. Given a function $u \map F \to \R_{\ge0}$ with regular level set
$\del F = \{u=0\}$, the underlying vector space can be described as consisting
of all closed $1$-forms $\kappa$ on $\Int F$ satisfying the following equivalent
conditions:
\begin{itemize}
\itemup{(i)}
The form $u\kappa$ extends to a smooth form on $F$.
\itemup{(ii)}
The vector field $\dvf\kappa/u$ extends to a smooth vector field on $F$
\ur(which is automatically tangent to $K := \del F$\ur).
\itemup{(iii)}
There exists a function $f \map F \to \R$ such that $\kappa - d (f \log u)$ is
the restriction of a closed $1$-form on $F$.
\end{itemize}
\end{corollary}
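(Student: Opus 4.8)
The plan is to fix an ideal Liouville form $\lambda$ and a function $u$ as in the statement, to write $\beta$ for the smooth extension of $u\lambda$, and to treat condition~(i) as the working description of the underlying vector space. First I would set up the affine structure. If $\lambda'$ is any other ideal Liouville form, then $\kappa := \lambda' - \lambda$ is closed on $\Int F$ (both are primitives of $\omega$) and $u\kappa = u\lambda' - \beta$ is a difference of two smooth forms, so $\kappa$ satisfies~(i). Conversely, if $\kappa$ is closed and satisfies~(i), then $\lambda + \kappa$ is again a primitive of $\omega$ and $u(\lambda + \kappa) = \beta + u\kappa$ extends smoothly; writing $\alpha := u\kappa$, the closedness of $\kappa = \alpha/u$ is equivalent to $u\,d\alpha = du \wedge \alpha$ on $F$, which at points of $K$ gives $du \wedge \alpha = 0$, hence $\alpha = g\,du$ along $K$ for some function $g$. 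In particular $\alpha$ pulls back to $0$ on $K$, so $\lambda + \kappa$ induces the same contact form $\beta\rst K$ on $K$ and is therefore an ideal Liouville form. Thus the ideal Liouville forms are exactly $\lambda + V$, where $V$ is the linear space of closed forms satisfying~(i); since two admissible functions $u$ differ by a smooth positive factor, $V$ does not depend on $u$, and this yields the asserted affine space.

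For (i)~$\Leftrightarrow$~(ii) I would combine this affine structure with Proposition~\ref{p:lioufields}. Contracting $\dvf\kappa \hook \omega = \kappa$ with the smooth form $\gamma := u^2\omega$ (from the proof of Proposition~\ref{p:contact}) yields $(\dvf\kappa/u) \hook \gamma = u\kappa$, so (ii)~$\Rightarrow$~(i) is immediate because $\gamma$ is smooth. For the converse, write $\kappa = \lambda' - \lambda$ with $\lambda'$ another ideal Liouville form; by Proposition~\ref{p:lioufields}a applied to $\lambda$ and to $\lambda'$, the corresponding fields $\dvf\lambda/u$ and $\dvf{\lambda'}/u$ both extend smoothly (these are the fields $\nu$ of that proof), whence $\dvf\kappa/u = \dvf{\lambda'}/u - \dvf\lambda/u$ extends smoothly. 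Since each such $\nu$ satisfies $du(\nu) = -1$ along $K$, the difference $\dvf\kappa/u$ satisfies $du(\dvf\kappa/u) = 0$ there, i.e.\ it is tangent to $K$; this is the parenthetical claim in~(ii).

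For (i)~$\Leftrightarrow$~(iii) the starting point is again $\alpha = g\,du$ along $K$. The key step is to show that closedness forces $g$ to be \emph{locally constant} on $K$: comparing the first-order terms transverse to $K$ in the relation $u\,d\alpha = du \wedge \alpha$ yields $dg = 0$ along $K$. Granting this, I would extend $g$ to a smooth function $f$ on $F$ that is locally constant near $K$; then near $K$ one has $d(f\log u) = f\,d\log u$, which has the same $1/u$ singularity as $\kappa$, and $\kappa - d(f\log u) = (\alpha - f\,du)/u$ extends smoothly (because $\alpha - f\,du$ vanishes on $K$) and is closed, which is~(iii). Conversely, for such an $f$ one has $u\,d(f\log u) = f\,du$ near $K$, so $u\kappa = u\,d(f\log u) + u\eta$ extends smoothly and~(i) holds.

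I expect this local-constancy step to be the crux. The genuine obstacle in~(iii) is the term $\log u\,df$ concealed in $d(f\log u)$: unlike the $1/u$ singularity, it is not matched by any $\kappa$ satisfying~(i), so it must be made to disappear near $K$, which is exactly why $f$ has to be taken locally constant there. What makes this admissible --- rather than an extra hypothesis --- is precisely the forced local constancy of the leading coefficient $g$, so that $f$ may be chosen locally constant near $\del F$ without losing the prescribed boundary behaviour.
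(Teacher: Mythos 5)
Your proposal is correct and follows the same overall route as the paper: the affine structure and the equivalence (i) $\Leftrightarrow$ (ii) are obtained from Propositions \ref{p:contact} and \ref{p:lioufields} essentially as the paper intends (the paper dismisses these as straightforward and only writes out (i) $\Rightarrow$ (iii)). The genuinely interesting divergence is in (i) $\Rightarrow$ (iii). The paper's argument takes the function $f$ with $\gamma = f\,du$ along $K$, extends it arbitrarily to $F$, and asserts that $\gamma - u\,d(f\log u) = \gamma - u\log u\,df - f\,du$ is a smooth $1$-form vanishing along $K$, hence equal to $u\kappa'$ with $\kappa'$ smooth and closed. This glosses over the term $u\log u\,df$, which vanishes along $K$ only continuously, and whose quotient by $u$, namely $\log u\,df$, blows up unless $df$ is flat along $K$. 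Your observation that closedness forces the leading coefficient $g$ to be locally constant on $K$ (obtained by differentiating $u\,d\alpha = du\wedge\alpha$ once in the direction transverse to $K$ --- this checks out in coordinates), so that $f$ may be extended locally constantly on a whole neighborhood of $\del F$ and the offending $\log u\,df$ term killed outright, is exactly the point the paper's proof needs and does not supply. In this respect your write-up is not merely correct but repairs the one delicate step.

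One caveat, which you partly acknowledge in your last paragraph: as literally stated, condition (iii) quantifies over arbitrary functions $f \map F \to \R$, and for such $f$ the implication (iii) $\Rightarrow$ (i) is false --- take $\kappa := d(f\log u)$ with $df$ not flat along $K$; then (iii) holds with $\kappa - d(f\log u) = 0$, yet $u\kappa = u\log u\,df + f\,du$ does not extend smoothly. Your verification of (iii) $\Rightarrow$ (i) silently uses the particular $f$ produced in the other direction (locally constant near $K$), which is the only reading under which the two conditions are actually equivalent. This defect lives in the statement (and in the paper's proof, which declares this direction trivial) rather than in anything you introduced, but a complete write-up should say explicitly that (iii) is to be read with $f$ locally constant near $\del F$, or at least with $df$ infinitely flat along $\del F$.
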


As a result, a Lagrangian submanifold $L \subset \Int F$ is exact for some ideal
Liouville form if and only if its Liouville class (with respect to an arbitrary
given ideal Liouville form) lies in the image of the natural map $H^1(F,\R) \to
H^1(L,\R)$.

\begin{proof}
The only (maybe) non-trivial claim which is not a straightforward consequence of 
Propositions \ref{p:contact} and \ref{p:lioufields} is that (i) implies (iii).
So assume that $u \kappa$ extends to a smooth form $\gamma$ on $F$. In $\Int F$,
$$ 0 = d\kappa = d(\gamma/u) = u^{-2} (u\, d\gamma + \gamma \wedge du). $$
By continuity, $u\, d\gamma + \gamma \wedge du$ is identically zero on $F$, and
hence $\gamma \wedge du = 0$ along $K = \{u=0\}$. Thus, there exists a function
$f \map K \to \R$ such that $\gamma = f\, du$ along $K$. Extend $f$ (keeping its
name) to a function on $F$ and observe that the form
$$ \gamma - u\, d (f \log u) = \gamma - u \log u \, df - f\, du $$
extends to a $1$-form $\gamma'$ on $F$ which vanishes identically along $K$. It
follows that $\gamma' = u \kappa'$ where $\kappa'$ is a closed $1$-form on $F$.
%% eg - To see why (i) implies (ii), write $\kappa = \lambda_1-\lambda_0$. Then,
%% using the same notations as in the proof of Proposition \ref{p:lioufields},
%% $\dvf\kappa = \dvf\lambda_1 - \dvf\lambda_0 = u (\nu_1-\nu_0)$, and $\nu :=
%% \nu_1-\nu_0$ is the desired extension of $\dvf\kappa / u$. Cconvesely, assume
%% that $\dvf\kappa / u$ extends to a vector field $\nu$ on $F$. Then
%% $$ u\kappa = u^2 (\nu \hook \omega)
%% = \nu \hook (u\,d\beta + \beta \wedge du) $$
%% and the latter expression defines the desired extension $\gamma$ of $u\kappa$
%% to $F$. In both cases, we can check that $\nu$ is tangent to $K$ just because
%% $\kappa$ is closed (and hence $\gamma = f\,du$ along $K$).
\end{proof}

Another corollary is the following avatar of a standard lemma (see Lemma 1.1 and
the subsequent remark in \cite{BEE}): 

\begin{corollary}[Exact Isotopies] \label{c:exact}
Let $(F,\omega)$ be an ideal Liouville domain and $\lambda_t$ $(t \in [0,1])$ a
path of ideal Liouville forms in $\Int F$. Then there is a symplectic isotopy 
$\psi_t$ $(t \in [0,1])$ of $F$, relative to the boundary, such that $\psi_0 =
\id$ and, for every $t \in [0,1]$, the form $\psi_t^*\lambda_t - \lambda_0$ is
the differential of a function with compact support in $\Int F$. 
\end{corollary}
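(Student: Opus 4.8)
The plan is to run Moser's method, the only genuine work being at the boundary. Set $\kappa_t := \partial_t \lambda_t$. Since every $\lambda_t$ is a primitive of the same form $\omega$, each $\kappa_t$ is closed and belongs to the vector space described in Corollary \ref{c:liouforms}; in particular $u\kappa_t$ extends smoothly over $F$. I look for a time-dependent vector field $X_t$ on $F$, smooth up to $K$ and vanishing along $K$, and for functions $h_t \colon F \to \R$, such that
\[ X_t \hook \omega = dh_t - \kappa_t \quad\text{on } \Int F. \]
Because $\kappa_t$ is closed, $X_t \hook \omega$ is then closed, so the flow $\psi_t$ of $X_t$ (complete and relative to the boundary, as $F$ is compact and $X_t$ vanishes on $K$) is symplectic. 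A direct computation gives $\partial_t(\psi_t^*\lambda_t) = \psi_t^*\, d(h_t + X_t \hook \lambda_t)$, so that, writing $g_t := h_t + X_t \hook \lambda_t$ and integrating, $\psi_t^*\lambda_t - \lambda_0 = df_t$ with $f_t := \int_0^t \psi_s^* g_s\, ds$. Everything reduces to arranging that $X_t$ vanishes on $K$ and that $g_t$ is supported in a compact subset of $\Int F$, both of which are conditions near $K$.

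To handle the boundary I would use the collar model of Proposition \ref{p:lioufields}. The embeddings $\iota_{\lambda_t} \colon \ol{SK} \to F$ restrict to the identity on $K$ and satisfy $\iota_{\lambda_t}^*\lambda_t = \lambda_\xi$, so on a fixed collar of $K$ the maps $\Phi_t := \iota_{\lambda_t} \circ \iota_{\lambda_0}^{-1}$ are defined for all $t \in [0,1]$ (by compactness of $[0,1]$), fix $K$ pointwise, and satisfy $\Phi_t^*\lambda_t = \lambda_0$. Differentiating the last identity in $t$ shows that their generating field $Y_t$ --- smooth up to $K$ and vanishing along $K$ --- obeys $\mathcal{L}_{Y_t}\lambda_t = -\kappa_t$ near $K$. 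I therefore set $X_t := Y_t$ there. Taking $h_t := -\, Y_t \hook \lambda_t$ (a function smooth up to $K$, since $Y_t$ vanishes like $u$ while $\lambda_t$ blows up like $u^{-1}$), Cartan's formula gives $Y_t \hook \omega = \mathcal{L}_{Y_t}\lambda_t - d(Y_t \hook \lambda_t) = dh_t - \kappa_t$, so the Moser equation holds near $K$ with $g_t = h_t + Y_t \hook \lambda_t \equiv 0$ there.

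To globalize, I extend $h_t$ smoothly over all of $F$ without changing its values near $K$, and define $X_t$ on $\Int F$ by $X_t \hook \omega = dh_t - \kappa_t$. By the previous paragraph, and uniqueness of the $\omega$-dual, this $X_t$ coincides with $Y_t$ near $K$, hence extends smoothly across $K$ and vanishes there, so its flow is a symplectic isotopy of $F$ relative to the boundary. The corresponding $g_t = h_t + X_t \hook \lambda_t$ is smooth on $\Int F$ and vanishes near $K$, hence is compactly supported; as $\psi_s$ preserves $\Int F$ and fixes $K$, each $\psi_s^* g_s$ is compactly supported too, and compactness of $[0,1]$ bounds their supports uniformly, so $f_t$ is compactly supported. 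This produces the required isotopy.

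The main obstacle is precisely the boundary analysis of the last two paragraphs: one must know that the field determined by $X_t \hook \omega = dh_t - \kappa_t$ extends smoothly across $K$ and vanishes there. This rests on the conformally invariant blow-up $u^2\omega = \gamma$, with $\gamma$ of rank $2$ along $K$, together with the fact --- read off from the proof of Corollary \ref{c:liouforms} --- that $(u\kappa_t)\rst K$ is proportional to $du$; these are exactly what allow $dh_t - \kappa_t$ to be written as $V \hook \omega$ for a field $V$ smooth up to $K$ and vanishing there. Proposition \ref{p:lioufields} packages this information into the normalizing field $Y_t$, whose smooth dependence on $t$ comes from the uniqueness clause of part b).
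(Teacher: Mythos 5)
Your proposal is correct and follows essentially the same route as the paper's proof: it uses the collar embeddings of Proposition \ref{p:lioufields} to produce a normalizing isotopy near $K$ whose generating field $Y_t$ (the paper's $\eta_t^0$) solves the Moser equation there with vanishing primitive, then extends the auxiliary function (your $h_t$, the paper's $-f_t$) globally and integrates the resulting field, which agrees with $Y_t$ near the boundary and hence extends smoothly and vanishes along $K$. The only differences are notational (you fold the paper's intermediate field $\eta_t^1$ with $\eta_t^1\hook\omega=-\dot\lambda_t$ directly into the equation $X_t\hook\omega=dh_t-\kappa_t$).
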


Here the path $\lambda_t$ is assumed to be smooth in the sense that $\lambda_t =
\beta_t/u$ where $\beta_t$ $(t \in [0,1])$ is a smooth path of $1$-forms on $F$,
\emph{ie.} a smooth $1$-form on $[0,1] \times F$ whose contraction with $\del_t$
is zero ($u$, as usual, is a non-negative function on $F$ with regular level set
$K := \del F = \{u=0\}$). 

\begin{proof}
For $t \in [0,1]$, let $\iota_t \map \ol{SK} \to F$ be the unique embedding such
that $\iota_t \rst K = \id$ and $\iota_t^*\lambda_t = \lambda_\xi$, where $\xi$
is the boundary contact structure (cf.~Proposition \ref{p:lioufields}). Setting
$U_t := \iota_t (\ol{SK})$, we have an isotopy of embeddings
$$ \psi_t^0 := \iota_t \circ \iota_0^{-1} \map U := U_0 \to F $$
with the following properties:
\begin{itemize}
\item
$\psi_0^0 = \id$ and $\psi_t^0 \rst K = \id$ for all $t \in [0,1]$,
\item
$(\psi_t^0)^*\lambda_t = \lambda_0$ on $U-K$ for all $t \in [0,1]$.
\end{itemize}
Therefore, the time-dependent vector field $\eta_t^0$ on $U_t$ generating the
isotopy $\psi_t^0$ satisfies, for all $t \in [0,1]$,
\begin{itemize}
\item
$\eta_t^0 = 0$ along $K$, and
\item
$(\eta_t^0 \cdot \lambda_t) + \dot\lambda_t = 0$ on $U_t - K$.
%% eg - This also reads $\eta_t^0 \hook \omega = - \dot\lambda_t - df_t^0$ where
%% $f_t^0 := \lambda_t(\eta_t^0)$.
\end{itemize}
Let $f_t^0 := \lambda_t(\eta_t^0)$ and denote by $\eta_t^1$ the time-dependent
locally Hamiltonian vector field on $\Int F$ given by $\eta_t^1 \hook \omega =
-\dot\lambda_t$. In $U_t-K$,
$$ (\eta_t^1 - \eta_t^0) \hook \omega = df_t^0 . $$ 
Now take a time-dependent function $f_t$ on $F$ equal to $f_t^0$ near $K$, and
consider the locally Hamiltonian vector field $\eta_t$ on $\Int F$ such that
$(\eta_t^1 - \eta_t) \hook \omega = df_t$. Since $\eta_t = \eta_t^0$ close to
the boundary, $\eta_t$ extends smoothly to a vector field on $F$ which vanishes
identically along $K$. On the other hand, in $\Int F$,
$$ (\eta_t \cdot \lambda_t) + \dot\lambda_t  
 = d \bigl(\lambda_t(\eta_t) - f_t\bigr), $$ 
and the function $\lambda_t(\eta_t) - f_t$ is zero on the neighborhood of $K$
where $f_t = f_t^0$ (and $\eta_t = \eta_t^0$). The desired isotopy $\psi_t$ is
obtained by integrating the vector field $\eta_t$.
\end{proof}

Ideal Liouville domains are stable in the following sense:

\begin{lemma}[Stability] \label{l:moser} \label{l:stability}
Let $F$ be a domain and $(\omega_t)$ $(t \in [0,1])$ a path of ideal Liouville
structures on $F$. Then there exists an isotopy $\phi_t$ $(t \in [0,1])$ of $F$
such that $\phi_0=\id$ and $\phi_t^*\omega_t = \omega_0$ for all $t \in [0,1]$.
Moreover, we can choose this isotopy relative to $K = \del F$ if ---~and clearly
only if~--- all forms $\omega_t$ induce the same boundary contact structure. 
\end{lemma}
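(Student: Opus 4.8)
The plan is to run Moser's method, with the understanding that essentially all of the work is concentrated at the boundary, where $\omega_t$ degenerates like $u^{-2}$. I seek an isotopy $\phi_t$ generated by a time-dependent vector field $X_t$ on $F$; differentiating the target relation $\phi_t^*\omega_t = \omega_0$ and using that $\omega_t$ is closed on $\Int F$ reduces the problem to solving the homological equation
$$ d(X_t \hook \omega_t) = -\dot\omega_t \quad\text{on } \Int F, $$
by a vector field $X_t$ that extends smoothly to all of $F$ (and, in the relative case, vanishes along $K$). The necessity of the boundary hypothesis is immediate: by Proposition \ref{p:contact} the contact structure $\xi_t$ is determined by $\omega_t$ and is preserved by every symplectomorphism, so an isotopy relative to $K$ forces $\xi_t$ to be constant.

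First I would dispose of the variation of the boundary contact structure. The forms $\omega_t$ induce a path of positive contact structures $\xi_t$ on $K$ (Proposition \ref{p:contact}); by Gray's stability theorem there is an isotopy $g_t$ of $K$ with $g_0 = \id$ and $g_t^*\xi_t = \xi_0$. Extending $g_t$ to an ambient isotopy $G_t$ of $F$ supported in a collar of $K$, and replacing $\omega_t$ by the path $G_t^*\omega_t$ (which is again a path of ideal Liouville structures, the defining property being invariant under diffeomorphisms of $F$ preserving $K$), I reduce to the case in which all forms induce the \emph{same} boundary contact structure $\xi_0$; if the original path already had this property I simply take $G_t = \id$. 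It then suffices to produce, in this reduced situation, an isotopy relative to $K$, since composing with $G_t$ recovers the general statement and leaves the relative case untouched.

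In the reduced situation I would choose a smooth path of ideal Liouville forms $\lambda_t = \beta_t/u$ with $d\lambda_t = \omega_t$, so that $\dot\omega_t = d\dot\lambda_t$, and take $X_t$ to be the locally Hamiltonian field defined by $X_t \hook \omega_t = -\dot\lambda_t$; this solves the homological equation on $\Int F$. The point is to correct $X_t$ near $K$ so that it extends smoothly and vanishes there, and here I would import the mechanism of Corollary \ref{c:exact}. Since the boundary contact structure is now constant, Proposition \ref{p:lioufields} provides, for each $t$, the normal-form embedding $\iota_t \map \ol{SK} \to F$ with $\iota_t^*\lambda_t = \lambda_\xi$, and the generating field $\eta_t^0$ of the collar isotopy $\iota_t \circ \iota_0^{-1}$ vanishes on $K$ and satisfies $\eta_t^0 \hook \omega_t = -\dot\lambda_t - d f_t^0$ with $f_t^0 := \lambda_t(\eta_t^0)$. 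Thus $X_t - \eta_t^0$ is Hamiltonian with Hamiltonian $f_t^0$ on the collar. Choosing $f_t$ on $F$ equal to $f_t^0$ near $K$ and setting $\eta_t := X_t - X_{f_t}$ (where $X_{f_t} \hook \omega_t = d f_t$), I obtain a field that coincides with $\eta_t^0$ near $K$, hence extends smoothly over $F$ and vanishes along $K$, and still solves the homological equation exactly since $d f_t$ is closed. Integrating $\eta_t$ yields the desired $\phi_t$, relative to $K$.

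The main obstacle is precisely this boundary correction. A careless choice of primitive makes the raw Moser field $X_t$ lie in $\xi$ along $K$ rather than vanish: multiplying $X_t \hook \omega_t = -\dot\lambda_t$ by $u^2$ gives $X_t \hook \gamma_t = -u\,\dot\beta_t$, which tends to $0$ on $K$, so $X_t\rst K$ falls in the kernel $\xi$ of $\gamma_t := u^2\omega_t$ and need not be zero. The vanishing on $K$ must therefore be engineered through the normal form of Proposition \ref{p:lioufields} and the cut-off of Corollary \ref{c:exact}, which is exactly why the lemma is placed after those results. A subsidiary technical point, settled by Corollary \ref{c:liouforms}, is that the path of primitives $\lambda_t$ can genuinely be chosen smoothly in the $\beta_t/u$ sense and globally on $\Int F$, legitimizing the identity $\dot\omega_t = d\dot\lambda_t$ with $\dot\lambda_t$ of the correct boundary behavior.
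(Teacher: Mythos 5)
Your proof is correct and follows essentially the same route as the paper's (sketched) argument: reduce to a fixed boundary contact structure via Gray's theorem, then run Moser using the collar embeddings of Proposition \ref{p:lioufields} to control the vector field at $K$. The only organizational difference is that the paper first normalizes the $\lambda_t$ to agree near $K$ and then applies standard Moser, whereas you keep the given primitives and correct the raw Moser field by the cut-off mechanism of Corollary \ref{c:exact}; these are the same computation packaged differently.
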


Here again, the required smoothness of the path $\omega_t$ is that there is a
smooth path $\beta_t$ of $1$-forms on $F$ such that $\omega_t = d(\beta_t/u)$.

\begin{proof}[Sketch of proof]
Due to the smoothness of the path $\omega_t$, the induced contact structure on
$K$ vary smoothly with $t$. Then, by Gray's stability Theorem (and the obvious
fact that any isotopy of $K$ extends to an isotopy of $F$), it suffices to treat
the case when all forms $\omega_t$ induce the same boundary contact structure
$\xi$. Using Proposition \ref{p:lioufields}, we can further arrange that the
forms $\omega_t$ coincide near $K$ and, more specifically, have smoothly varying
ideal Liouville forms $\lambda_t$ which all agree in a neighborhood of $K$. Then
we conclude with Moser's standard argument. 
\end{proof}

The next proposition is another expected and straightforward result relating the
symplectic geometry of (the interior of) an ideal Liouville domain $(F,\omega)$
with the contact geometry of its boundary $(K,\xi)$. The notations are as
follows:
\begin{itemize}
\item
$\DD (F, \omega)$ is the group of diffeomorphisms of $F$ preserving $\omega$,
\item
$\DDc (F,\omega) \subset \DD (F,\omega)$ is the subgroup of diffeomorphisms
fixing $K := \del F$ pointwise, and
\item 
$\DD (K,\xi)$ is the group of diffeomorphisms of $K$ preserving $\xi$.
\end{itemize}

\begin{proposition}[Relations between Automorphism Groups] \label{p:fibration}
Let $(F,\omega)$ be an ideal Liouville domain with contact boundary $(K,\xi)$.
The restriction homomorphism 
$$ \DD(F,\omega) \to \DD(K,\xi) $$
is a Serre fibration, with associated long exact sequence of homotopy groups 
$$ \ldots \pi_k \DDc (F,\omega) \to \pi_k \DD (F,\omega) \to 
   \pi_k \DD (K,\xi) \to \pi_{k-1} \DDc (F,\omega) \ldots\,. $$
\end{proposition}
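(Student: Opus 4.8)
The plan is to verify the homotopy lifting property for disks, which characterizes Serre fibrations, and then invoke the standard long exact sequence for the fibre over $\id_K$, which is precisely $\DDc(F,\omega)$. Writing $p \map \DD(F,\omega) \to \DD(K,\xi)$ for the restriction homomorphism (well-defined by Proposition \ref{p:contact}), I would fix a commutative square made of a map $\phi \map D^n \to \DD(F,\omega)$ and a homotopy $G \map D^n \times [0,1] \to \DD(K,\xi)$ with $G(s,0) = \phi(s)\rst K$, and produce a lift $\Phi \map D^n \times [0,1] \to \DD(F,\omega)$ satisfying $\Phi(s,0) = \phi(s)$ and $\Phi(s,t)\rst K = G(s,t)$.

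First I would dispose of the boundary behaviour by a plain isotopy-extension argument. The contactomorphisms $g_{s,t} := G(s,t) \circ G(s,0)^{-1}$ form, for each $s$, an isotopy of $K$ starting at the identity; extending the generating (parametrized) vector fields from $K$ to $F$ yields an isotopy $\tilde g_{s,t}$ of $F$ with $\tilde g_{s,0} = \id$ and $\tilde g_{s,t}\rst K = g_{s,t}$, depending continuously on $s$. Setting $H(s,t) := \tilde g_{s,t} \circ \phi(s)$ then gives a continuous family of diffeomorphisms of $F$ (not preserving $\omega$) with $H(s,0) = \phi(s)$ and $H(s,t)\rst K = G(s,t)$.

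The second step corrects $H$ into an $\omega$-preserving family by Moser's method. Consider the pulled-back forms $\omega_{s,t} := H(s,t)^*\omega$; each is an ideal Liouville structure, and by Proposition \ref{p:contact} its boundary contact structure is $(H(s,t)\rst K)^*\xi = G(s,t)^*\xi = \xi$ since $G(s,t)$ preserves $\xi$, while $\omega_{s,0} = \phi(s)^*\omega = \omega$. Thus for each $s$ the path $t \mapsto \omega_{s,t}$ is a path of ideal Liouville structures all inducing the same contact structure $\xi$, issued from $\omega$. The relative form of the Stability Lemma \ref{l:stability} (resting on the Exact-Isotopies Corollary \ref{c:exact}) then furnishes an isotopy $\psi_{s,t}$ of $F$, relative to $K$, with $\psi_{s,0} = \id$ and $\psi_{s,t}^*\omega_{s,t} = \omega$. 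The composite $\Phi(s,t) := H(s,t) \circ \psi_{s,t}$ now satisfies $\Phi(s,t)^*\omega = \omega$, $\Phi(s,0) = \phi(s)$, and $\Phi(s,t)\rst K = G(s,t)$ (the last because $\psi_{s,t}$ is relative to $K$), which is the desired lift.

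The genuinely delicate point is not any single step but their parametric execution: I must keep every construction continuous in $s \in D^n$. This is why I would lean on the actual \emph{proofs} of Lemma \ref{l:stability} and Corollary \ref{c:exact} rather than merely their statements, for there the correcting vector fields are produced by solving the pointwise-linear equations $\eta \hook \omega = -\dot\lambda$ (together with the Gray-stability step on $K$); such solutions depend smoothly on the data, hence on $s$, and integrating the resulting $s$-families of vector fields yields the required $s$-families of isotopies. Granting this parametric naturality, the homotopy lifting property holds, $p$ is a Serre fibration with fibre $p^{-1}(\id_K) = \DDc(F,\omega)$, and the displayed long exact sequence is the standard one attached to such a fibration.
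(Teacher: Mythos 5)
Your argument is correct in outline, but it follows a genuinely different route from the paper's. You proceed in two stages: a soft isotopy-extension step producing a family $H(s,t)$ of diffeomorphisms of $F$ with the prescribed boundary restriction, followed by a parametric Moser correction, observing that the pulled-back forms $H(s,t)^*\omega$ are ideal Liouville structures all inducing the fixed boundary contact structure $\xi$ (since $G(s,t)$ preserves $\xi$) and invoking the relative case of the Stability Lemma \ref{l:stability} to straighten them rel $K$; the composite $\Phi(s,t)=H(s,t)\circ\psi_{s,t}$ then does lift $G$. This is sound, but --- as you yourself flag --- its weight rests on the parametric naturality of Lemma \ref{l:stability}, whose proof runs through Gray stability and the collar normalizations of Proposition \ref{p:lioufields}; every choice made there (collars, cut-offs, primitives) must be made uniformly in $s$, which is possible but is exactly the part one would have to write out. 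The paper instead lifts the boundary isotopy in one stroke: a contact isotopy $\ch\phi_t$ of $(K,\xi)$ is canonically covered by a Hamiltonian isotopy $S\ch\phi_t$ of the symplectization $SK$, which, transported by the embedding $\iota_\lambda \map \ol{SK} \to F$ of Proposition \ref{p:lioufields}, becomes a Hamiltonian isotopy of the collar $U$ extending $\ch\phi_t$; cutting off the Hamiltonian functions away from $K$ inside $U$ and integrating from $\phi_0$ gives the lift directly. That construction preserves $\omega$ by design rather than after a correction, keeps the lift supported in a collar, and is visibly continuous in all parameters, since the Hamiltonians depend linearly on the contact vector fields and the cut-off is fixed once and for all. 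In short, your scheme buys generality --- ``extend, then Moser'' applies verbatim to many fibration statements of this kind --- at the price of tracking more choices, whereas the paper's exploits the symplectization structure of the collar to obtain an essentially choice-free, manifestly parametric lift.
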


The homomorphism $\pi_1 \DD(K,\xi) \to \pi_0 \DDc(F,\omega)$ can be used to
define natural semigroups in the symplectic mapping class group $\MCG (F,\omega)
:= \pi_0 \DDc(F,\omega)$: An element in there is positive (resp.\ non-negative)
if it is the image of a positive (resp.\ non-negative) loop in $\DD (K,\xi)$.
When $(K,\xi)$ is a ``contact circle bundle'' (meaning that some Reeb flow
generates a free circle action), the image of the corresponding loop is the
mapping class of a ``fibered Dehn twist''. 

\begin{proof}[Sketch of proof]
We merely explain how to lift paths. Let $\phi_0 \in \DD(F,\omega)$ and take a
path $\ch\phi_t \in \DD(K,\xi)$ $(t \in [0,1])$ starting with $\ch\phi_0 =
\phi_0 \rst K$. The contact isotopy $\ch\phi_t$ lifts to a Hamiltonian isotopy
$S\ch\phi_t$ in the symplectization $SK$. Pick an arbitrary ideal Liouville form
$\lambda$ and identify $\ol{SK}$ with the collar neighborhood $U = \iota_\lambda
(\ol{SK})$ of Proposition \ref{p:lioufields}. The path $S\ch\phi_t$ can then be
viewed as a Hamiltonian isotopy of $U$ extending $\ch\phi_t$. We obtain the path
$\phi_t$ by cutting off the corresponding Hamiltonian functions away from $K$
inside $U$, and by integrating the new Hamiltonian functions with $\phi_0$ as
the initial condition.
\end{proof}

We now describe the two main examples of ideal Liouville domains.

\begin{example}[\textit{in vivo}: Convex Hypersurfaces in Contact Manifolds]
\label{x:ild1}
Let $(V,\xi)$ be a contact manifold and $S$ a hypersurface in $V$ which is \emph
{$\xi$-convex}, meaning that $S$ is transverse to some contact vector field
$\nu$. Consider the ``dividing set''
$$ \Gamma := \{p \in S \with \nu_p \in \xi_p\} \subset S. $$ 
Then the closure of every relatively compact connected component of $S - \Gamma$
is naturally an ideal Liouville domain (see \cite[I.3-C]{Gi1}).

To see this, pick an equation $\alpha$ of $\xi$, set $u := \alpha(\nu)$ and note
that $\Gamma$ is the zero-set of $u \rst S$. We claim that $u \rst S$ vanishes
transversely. Indeed, the identity $du \rst \xi = -(\nu \hook d\alpha) \rst \xi$
(drawn from the Cartan formula for the Lie derivative) implies that $du \rst{
\xi \cap TS } \ne 0$ along $\Gamma$, and that $\Gamma$ is actually a contact
submanifold of $(V,\xi)$. Moreover, $\nu$ restricted to the open set $\{u\ne0\}
\subset V$ is the Reeb vector field of the contact form $\alpha/u$. Since $\nu$
is transverse to $S$, the differential $d(\alpha/u)$ induces a symplectic form
on $S - \Gamma$.
\end{example}

\begin{example}[\textit{in vitro}: Ideal Completion of a Liouville Domain]
\label{x:ild2}
Let $(F,\lambda)$ be a Liouville domain in the sense of Definition~\ref{d:ld},
and let $u \map F \to \R_{\ge0}$ be a function with the following properties:
\begin{itemize}
\item
$u$ admits $K := \del F$ as its regular level set $\{u=0\}$,
\item
$\dvf\lambda \cdot \log u < 1$ at every point in $\Int F$.
\end{itemize}
Then a simple calculation (already resorted to in the introduction) shows that
$\omega := d(\lambda/u)$ is a symplectic form on $\Int F$, and so $(F,\omega)$
is an ideal Liouville domain. Moreover, since conditions 1 and 2 define a convex
cone of functions $u$, it follows from Lemma \ref{l:moser} that, up to isotopy
relative to the boundary, the geometry of $(F,\omega)$ is independent of~$u$.
Taking $u$ non-increasing along the orbits of $\dvf\lambda$ and equal to $1$
outside the collar neighborhood of $K$ associated with $\lambda$ (by Proposition
\ref{p:lioufields}), we see that $(\Int F, \omega)$ is symplectically isomorphic
to the completion of $(F,\lambda)$. For this reason, $(F,\omega)$ is called the
\emph{ideal completion} of $(F,\lambda)$. It can be alternatively obtained by
gluing $K$ to the usual completion $(\wh F, \wh\lambda)$ in exactly the same way
as $\ol{SK}$ was constructed from $SK$. 
\end{example}

To conclude this general discussion of ideal Liouville domains, here is the
product construction alluded to in the introduction:

\begin{proposition}[Product of Ideal Liouville Domains]
Let $(F_1,\omega_1)$ and $(F_2,\omega_2)$ be two ideal Liouville domains. Up to
isomorphism, there exists a unique ideal Liouville domain $(F,\omega)$ admitting
a diffeomorphism $\phi \map \Int F \to \Int (F_1 \times F_2)$ such that $\omega
= \phi^* (\omega_1 \oplus \omega_2)$ and, for any Liouville forms $\lambda_1$
and $\lambda_2$ on $F_1$ and $F_2$, respectively, $\phi^* (\lambda_1 \oplus
\lambda_2)$ is a Liouville form on $F$.
\end{proposition}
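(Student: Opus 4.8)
The plan is to round off the corner of $F_1 \times F_2$ along $\del F_1 \times \del F_2$ symplectically and then recognize the result as an \emph{ideal completion} (Example~\ref{x:ild2}); this sidesteps having to write down by hand a boundary-defining function for the smoothed domain, near the corner of which the natural candidate $\sqrt{u_1 u_2}$ is awkward. First I would fix ideal Liouville forms $\lambda_i$ on $\Int F_i$ and functions $u_i \map F_i \to \R_{\ge0}$ with $\del F_i = \{u_i = 0\}$ regular. Writing $\dvf{\lambda_i} = u_i\nu_i$ as in the proof of Proposition~\ref{p:lioufields}, the relation $\nu_i\cdot u_i = -1$ gives $\dvf{\lambda_i}\cdot u_i = -u_i$, so for any regular value $\eps_i > 0$ the field $\dvf{\lambda_i}$ is outward transverse along $\{u_i = \eps_i\}$ and $W_i := \{u_i \ge \eps_i\}$ is a genuine Liouville domain (Definition~\ref{d:ld}) whose ideal completion recovers $F_i$. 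In particular $\Int F_i$ is symplectomorphic to the completion $\wh{W_i}$, and hence $\Int(F_1\times F_2) \cong \wh{W_1}\times\wh{W_2}$.

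Next I would form the product Liouville domain. On $W_1 \times W_2$ the form $\mu := \lambda_1 \oplus \lambda_2$ is a Liouville form with Liouville field $\dvf{\lambda_1} + \dvf{\lambda_2}$; this field is outward transverse along the two smooth faces, and in the collar model of Proposition~\ref{p:lioufields} (where $\lambda_i = r_i\alpha_i$ on the symplectization end, with $\alpha_i := (u_i\lambda_i)\rst{\del W_i}$) one computes $(\dvf{\lambda_1}+\dvf{\lambda_2})\cdot(r_1 r_2) = 2r_1 r_2 > 0$. Thus the corner $\del W_1 \times \del W_2$ can be rounded off along a hypersurface transverse to $\dvf{\lambda_1}+\dvf{\lambda_2}$, and on any such hypersurface the restriction of $\mu$ is automatically a contact form. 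This produces a Liouville domain $(W,\mu)$ with $\Int W = \Int(W_1\times W_2)$; since $\dvf{\lambda_1}+\dvf{\lambda_2}$ is complete on $\wh{W_1}\times\wh{W_2}$ (part~a) of Proposition~\ref{p:lioufields}) and $\del W$ is transverse to it, the completion of $(W,\mu)$ is all of $\wh{W_1}\times\wh{W_2}$. Letting $(F,\omega)$ be the ideal completion of $(W,\mu)$, we get $\Int F \cong \wh{W} \cong \Int(F_1\times F_2)$ symplectically; calling $\phi$ this symplectomorphism, $\omega = \phi^*(\omega_1\oplus\omega_2)$, and by Example~\ref{x:ild2} the ideal Liouville form of $F$ corresponds under $\phi$ to the completion Liouville form $\mu$, that is, to $\phi^*(\lambda_1\oplus\lambda_2)$. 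Its boundary contact form is $\mu\rst{\del W}$, which is contact on the rounded corner and on the caps $W_1\times\del W_2$ and $\del W_1\times W_2$ alike.

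Then I would check the two remaining points: that $\phi^*(\lambda_1'\oplus\lambda_2')$ is an ideal Liouville form for \emph{every} pair of ideal Liouville forms $\lambda_i'$, and that $(F,\omega)$ is independent of the auxiliary choices. For the first, each $\kappa_i := \lambda_i' - \lambda_i$ lies in the vector space of Corollary~\ref{c:liouforms}, and a direct application of the criteria there (in the form~(iii), using $\log u_i$) shows that $\phi^*(\kappa_1\oplus\kappa_2)$ lies in the corresponding vector space for $(F,\omega)$; since ideal Liouville forms make up an affine space over it, $\phi^*(\lambda_1'\oplus\lambda_2') = \phi^*(\lambda_1\oplus\lambda_2) + \phi^*(\kappa_1\oplus\kappa_2)$ is again one. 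Independence of $\eps_i$ and $u_i$ follows from the Stability Lemma~\ref{l:stability}: the admissible data form a convex set, along which the $\omega$ trace out a smooth path of ideal Liouville structures inducing the fixed boundary contact structure, so all the resulting domains are isomorphic relative to the boundary.

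Finally, for uniqueness, suppose $(F',\omega',\phi')$ enjoys the same property. Then $\psi := \phi'^{-1}\circ\phi \map \Int F \to \Int F'$ is a symplectomorphism sending the ideal Liouville form $\phi^*(\lambda_1\oplus\lambda_2)$ to $\phi'^*(\lambda_1\oplus\lambda_2)$. By part~b) of Proposition~\ref{p:lioufields} each of $F,F'$ identifies a collar of its boundary with $\ol{SK}$ through this very form, via the unique embeddings $\iota$ and $\iota'$; hence $\psi = \iota'\circ\iota^{-1}$ near the boundary and extends across it to an isomorphism $(F,\omega)\to(F',\omega')$, giving uniqueness up to isomorphism. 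I expect the main obstacle to be the symplectic corner-rounding of the second step together with the check that the contact condition persists at the new boundary; this is exactly what transversality of the rounding hypersurface to $\dvf{\lambda_1}+\dvf{\lambda_2}$ secures, the inequality $(\dvf{\lambda_1}+\dvf{\lambda_2})\cdot(r_1 r_2) > 0$ being the crux of the matter.
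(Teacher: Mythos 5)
Your proposal is correct and follows essentially the same route as the paper, whose entire proof is the observation that $(\Int(F_1\times F_2),\lambda_1\oplus\lambda_2)$ is the completion of some (usual) Liouville domain, that the desired product is its ideal completion, and that uniqueness follows from convexity of the sets of ideal Liouville forms. What you add is the explicit truncation-and-corner-rounding that justifies the paper's ``Clearly,'' plus a collar-embedding uniqueness argument via Proposition \ref{p:lioufields}(b) in place of the convexity remark; both are sound.
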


\begin{proof}
Clearly, $(\Int (F_1 \times F_2), \lambda_1 \oplus \lambda_2)$ is the (usual)
completion of some Liouville domain. The desired product is the ideal completion
of this domain. Uniqueness follows from the convexity of the sets of ideal
Liouville forms on $F_1$ and $F_2$.
\end{proof}

\begin{remark}[Generalizations] \label{r:gen}
I presented the notion of ideal Liouville domains in a talk at ETH (Zurich) in
November 2010 (for Eddi Zehnder's 70th birthday), and the first published paper
where they explicitly appear is \cite{MNW}. The concept was further generalized
in \cite{Co2} where Courte defined ideal Liouville cobordisms. A cobordism is an
oriented domain $F$ whose boundary components are given prescribed orientations;
$\del_+F$ (resp.~$\del_-F$) denotes the union of the boundary components endowed
with the boundary orientation (resp.\ with the reversed orientation). An \emph
{ideal Liouville cobordism} is a cobordism $F$ together with an exact symplectic
form $\omega$ on $\Int F$ which admits a primitive $\lambda$ such that:
\begin{itemize}
\item
For some/any function $u \map F \to \R_{\ge0}$ with regular level set $\del_+F
= \{u=0\}$, the product $u \lambda$ extends to a smooth $1$-form on $\Int F \cup
\del_+F$ which induces a contact form on $\del_+F$.
\item
For some/any function $u \map F \to \R_{\ge0}$ with regular level set $\del_-F
= \{u=0\}$, the quotient $\lambda / u$ extends to a smooth $1$-form on $\Int F
\cup \del_-F$ which induces a contact form on $\del_-F$.
\end{itemize}
Thus, an ideal Liouville domain $(F,\omega)$ is an ideal Liouville cobordism for
which $\del_-F$ is empty.

All the results discussed above readily extend to ideal Liouville cobordisms. In
particular, both $\del_-F$ and $\del_+F$ inherit canonical contact structures
which are positive for their prescribed orientations. In other words, $\del_-F$
is concave while $\del_+F$ is convex.  

Finally, the global exactness condition on the symplectic form can be relaxed
since exactness is needed only near the boundary. This leads to the definition
of \emph{ideal symplectic domains/cobordisms}. 
\end{remark}

\subsection{Ideal Liouville domains in contact geometry}

We will now explain how the notion of ideal Liouville domain can help in the
study of the relationships between contact structures and open books. We begin 
with a few basic definitions and constructions.

\medskip

An \emph{open book} in a closed manifold $V$ is a pair $(K,\theta)$, where:
\begin{itemize}
\item
$K \subset V$ is a submanifold of codimension $2$ with trivial normal bundle.
\item
$\theta \map V-K \to \S^1 = \R/2\pi\Z$ is a smooth locally trivial fibration 
which, in some neighborhood $\D^2 \times K$ of $K = \{0\} \times K$, is simply
the (pullback of the) angular coordinate in $\D^2 - \{0\}$.
\end{itemize}
The submanifold $K$ is called the \emph{binding} of the open book while the 
closures of the fibers of $\theta$ are the \emph{pages}. The binding and the
pages inherit coorientations from the canonical orientation of $\S^1$. Hence, if
$V$ is oriented, they are automatically oriented (and the binding is oriented as
the boundary of every page).

In practice, most often open books arise from (smooth) complex-valued maps. If a
map $h \map V \to \C$ vanishes transversely, with zero-set $K := \{h=0\}$, and
if the argument function $\theta := h/|h| \map V-K \to \S^1$ has no critical
points, then the pair $(K,\theta)$ is an open book. Obviously, every open book
$(K,\theta)$ can be obtained in this way, and the defining map $h$ is unique up
to multiplication by a positive function.
 
\medskip

An open book $(K,\theta)$ in a closed manifold $V$ is characterized by its 
monodromy, which is a diffeomorphism of the $0$-page $F := K \cup \{\theta=0\}$
relative to the boundary $K$ and defined only up to isotopy. More precisely,
consider the affine space of \emph{spinning vector fields}, namely vector fields
$\nu$ on $V$ satisfying the following properties:
\begin{itemize}
\item
$\nu = 0$ along $K$ and $\nu \cdot \theta = 2\pi$ in $V-K$;
\item
$\nu$ is \emph{weakly smooth} in the sense that it lifts to a smooth vector
field on the manifold with boundary obtained from $V$ by a real oriented blowup
along $K$ (see Remark \ref{r:smooth} for comments on this condition). 
\end{itemize}
For any such vector field $\nu$, the time~$1$ map of its flow, restricted to
$F$, is a diffeomorphism $\phi$ of $F$ relative to $K$. Moreover, as $\nu$ runs
over its affine space, $\phi$ sweeps out an entire mapping class in $\MCG(F) :=
\pi_0 \DDc(F)$ (cf.~Remark \ref{r:smooth}). This mapping class ---~and sometimes
%% eg - Any time-dependent vector field on $F$ which is zero along $K$ can be
%% added to any given $\nu$ as a vector field on $V$ tangent to the pages, and
%% the resulting vector field is weakly smooth.
also, by extension, any of its representatives~--- is the \emph{monodromy} of
the open book $(K,\theta)$. 

Conversely, given a domain $F$ with non-empty boundary and a diffeomorphism
$\phi$ of $F$ relative to $K := \del F$, one can construct a closed manifold
$\OB(F,\phi)$ endowed with an obvious open book whose $0$-page is parametrized
by $F$ and whose monodromy is represented by $\phi$. There are two steps in the
construction.

\Step1)
We consider the mapping torus of $\phi$, namely the quotient
$$ \MT(F,\phi) := (\R \times F) \big/ {\sim} \quad
   \text{where} \quad (t,p) \sim \bigl(t-1, \phi(p)\bigr). $$ 
This is a compact manifold (with boundary) which has an obvious fibration
$$ \wh\theta \map \MT(F,\phi) \to \S^1 = \R/2\pi\Z $$
coming from the projection $\R \times F \to \R$ multiplied by $2\pi$. All fibers
are diffeomorphic to $F$ and we use the projection $\R \times F \to \MT(F,\phi)$
restricted to $\{0\} \times F$ as a special parametrization of the $0$-fiber
$\{\wh\theta=0\}$ by $F$. We notice that, since $\phi$ induces the identity on
$K = \del F$, the boundary of $\MT(F,\phi)$ is canonically diffeomorphic to
$\S^1 \times K$, the restriction of $\wh\theta$ to $\del\MT(F,\phi)$ being given
by the projection $\S^1 \times K \to \S^1$.

An important point about the manifold $\MT(F,\phi)$ is that it depends only on
the mapping class of $\phi$ in the following sense: If $\phi_0$ and $\phi_1$ are
diffeomorphisms of $F$ relative to $K$ and representing the same mapping class
in $\MCG(F)$, then there is a diffeomorphism $\MT(F,\phi_0) \to \MT(F,\phi_1)$
which respects the fibrations over $\S^1$ and the special parametrizations of
the $0$-fibers.

\Step2) 
We construct the closed manifold $\OB(F,\phi)$ from $\MT(F,\phi)$ by collapsing
every circle $\S^1 \times \{.\} \subset \S^1 \times K = \del\MT(F,\phi)$ to a
point. Thus, $\OB(F,\phi)$ is the union of $\Int \MT(F,\phi)$ and $K = (\S^1
\times K) / \S^1$. We denote by 
$$ \theta \map \OB(F,\phi) - K = \Int \MT(F,\phi) \to \S^1 $$
the restriction of the fibration $\wh\theta$.

To see that $(K,\theta)$ is indeed an open book in $\OB(F,\phi)$, we need to
specify the smooth structure near $K$. In short, we blow down $\del\MT(F,\phi)$,
the points of $\del\MT(F,\phi) = \S^1 \times K$ corresponding to oriented lines
in the (trivial) normal bundle of $K$ in $\OB(F,\phi)$. Concretely, we fix a
collar neighborhood $\wh N$ of $\del\MT(F,\phi)$ whose fibers are intervals
contained in the fibers of $\wh\theta$ and we declare that, for every $p \in K$,
the union of all intervals ending on $\S^1 \times \{p\} \subset \del\MT(F,\phi)$
projects to a smooth disk $D_p$ in $\OB(F,\phi)$ transverse to $K$ at $p$. More
specifically, we choose a function $\wh r \map \MT(F,\phi) \to \R_{\ge0}$ with
regular level set $\del\MT(F,\phi) = \{\wh r = 0\}$, and we take the induced
function $r$ on $\OB(F,\phi)$, together with $\theta$, as polar coordinates near
$p$ on the disk $D_p$.

It is not hard to check that a different choice of collar neighborhood $\wh N$ 
and function $\wh r$ leads to an equivalent smooth structure. Actually, the two
structures are conjugated by a homeomorphism of $\OB(F,\phi)$ which preserves
$\theta$ and induces the identity on the page $F_0 := K \cup \{\theta=0\}$. As a
result, $(K,\theta)$ is an open book in $\OB(F,\phi)$, its $0$-page $F_0$ has a
(special) parametrization by $F$, and its monodromy is represented by $\phi$
(note that the vector field $\del_t$ on $\R \times F$ descends to a smooth
vector field on $\MT(F,\phi)$, so its image in $\OB(F,\phi)$ is tautologically
weakly smooth).
%% eg - Maybe I should explain here how the construction simplifies when $\phi$
%% is the identity near $K$. It might make the proof of Proposition \ref{p:wt}
%% a bit easier. 

\begin{remark}[Smoothly Generated Monodromy Diffeomorphisms] \label{r:smooth}
Given an open book $(K,\theta)$ in $V$, one can easily find spinning vector
fields $\nu$ on $V$ that are smooth, not just weakly smooth. Thus, the monodromy
of $(K,\theta)$ has representatives which are \emph{smoothly generated}, meaning
that they can be obtained by integrating smooth spinning vector fields $\nu$ on
$V$. In particular, one can check that any representative of the monodromy which
is the identity on a neighborhood of $K$ is smoothly generated (see Lemma
\ref{l:rel} for the symplectic version of this assertion). However, Not every
representative of the monodromy is smoothly generated. The following simple
example was pointed out to me by Roussarie \cite{Ro}.

Consider in $\R^2$ a smooth vector field $\nu = 2\pi (\del_\theta + rf \del_r)$,
where $f \map \R^2 \to \R$ is a smooth function. Let $\psi, \phi \map \R \to \R$
denote the diffeomorphisms of the $x$-axis induced by the flow of $\nu$ at times
$1/2$ and $1$, respectively. Then $\phi$ and $\psi$ commute, and $\psi$ reverses
orientation while $\phi$ preserves it. These properties restrict the behavior of
$\phi$. For instance, the germ of $\phi$ at $0$ cannot have the shape $\phi(x)
= x+x^2+{}$ higher order terms. More generally, here is Roussarie's observation:
If $\phi-\id$ is not infinitely flat at $0$ then the first non-zero term in its
Taylor expansion has odd degree. Indeed, if $\phi-\id$ is not flat, it has a
fixed sign on $(0,\eps]$ for $\eps>0$ sufficiently small. Suppose that $\phi(x)
> x$ for all $x \in (0,\eps]$. Since $\psi$ is decreasing and commutes with
$\phi$, 
$$ \phi \circ \psi (x) = \psi \circ \phi (x) < \psi(x) \quad
   \text{for all $x \in (0,\eps]$.} $$
Hence, $\phi(x) < x$ for all $x \in [\psi(\eps),0)$, and this proves the claim.

In contrast, if the vector field $\nu = 2\pi (\del_\theta + rf \del_r)$ is only
assumed to be ``weakly smooth'' (namely, if $\nu$ lifts to a smooth vector field
on the blownup plane), then its return map $\phi$ on $\R_{\ge0}$ remains smooth
and can freely vary in its mapping class. Indeed, the hypothesis means that $f$
is smooth not as a function on $\R^2$ but as a function of the polar coordinates
$(r,\theta) \in \R_{\ge0} \times \S^1$. Note also that, since $f$ and $df$ are
bounded near $\{0\} \times \S^1$, the vector field $\nu$ is Lipschitz. These 
%% eg - But $\nu$ is continuously differentiable only if $f$ is constant on
%% $\{0\} \times \S^1$).
remarks equally apply to weakly smooth spinning vector fields in any dimension. 
\end{remark}

The following definition was introduced in \cite{Gi2} to establish formal links
between open books and contact structures:

\begin{definition}[Open Books and Contact Structures] \label{d:sob}
A contact structure $\xi$ on a closed manifold $V$ is \emph{supported} by an
open book $(K,\theta)$ in $V$ if it admits a Pfaff equation $\alpha$ which is
\emph{adapted to $(K,\theta)$} in the sense that:
\begin{itemize}
\item
$\alpha$ induces a positive contact form on $K$, and
\item
$d\alpha$ induces a positive symplectic form on the fibers of $\theta$.
\end{itemize}
Orientations here come from the orientation of $V$ defined by $\xi$.
\end{definition}

We will show below that an open book supporting a contact structure has some
specific geometric structure that we now describe:

\begin{definition}[Liouville Open Books] \label{d:lob}
A \emph{Liouville open book} $(K,\theta,\omega_t)$ in a closed manifold $V$ is
an open book $(K,\theta)$ whose pages $F_t := K \cup \{\theta = 2\pi t\}$ are
equipped with ideal Liouville structures $\omega_t$ $(2\pi t \in \S^1)$ having
primitives $\lambda_t$ such that: For some/any map $h \map V \to \C$ defining
$(K,\theta)$, the products $|h|\, \lambda_t$ are the restrictions to the fibers
$F_t-K$ of a global (smooth) $1$-form $\beta$ on $V$. Such a $1$-form $\beta$ is
referred to as a \emph{binding $1$-form} (associated with $h$), as it indeed
ties the forms $\omega_t$ about~$K$.

\end{definition}

In this context, we consider the affine space of weakly smooth spinning vector
fields $\nu$ on $V$ satisfying the additional condition that $\nu$ preserves the
ideal Liouille structures of pages. This means that the flow of $\nu$, which
rotates the open book $(K,\theta)$, preserves the family of forms $\omega_t$.
Equivalently, $\nu$ spans the kernel of a closed $2$-form on $V-K$ which induces
$\omega_t$ on each page $F_t$.

For such a symplectically spinning vector field $\nu$, the time~$1$ map of its
flow restricted to the ideal Liouville page $(F,\omega) := (F_0,\omega_0)$ is a
symplectic diffeomorphism $\phi$ relative to $K = \del F$. Moreover, as $\nu$
runs over its affine space, $\phi$ sweeps out a full symplectic mapping class in
$\MCG(F,\omega) := \pi_0 \DDc(F,\omega)$. This mapping class is the symplectic
monodromy of the Liouville open book.

The next lemma shows that the symplectic monodromy of a Liouville open book has
representatives which are generated by smooth symplectically spinning vector
fields and can be further assumed to be the identity on a neighborhood of $K$.
As in the usual (non-Liouville) case, however, not every representative of the
symplectic monodromy can be generated in this way.

\begin{lemma}[Binding Forms and Monodromy] \label{l:smooth}
Let $(K,\theta,\omega_t)$ be a Liouville open book in a closed manifold $V$,
and $h \map V \to \C$ a map defining $(K,\theta)$. For every binding $1$-form
$\beta$, the vector field $\nu$ on $V-K$ spanning the kernel of $d(\beta/|h|)$
and satisfying $\nu \cdot \theta = 2\pi$ extends to a smooth vector field on $V$
which is zero along $K$. Furthermore, $\beta$ can be chosen so that $\nu$ is 
$1$-periodic near $K$.  
\end{lemma}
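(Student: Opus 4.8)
The plan is to characterise $\nu$ intrinsically, clear the singular behaviour along $K$ by a real oriented blowup, and only at the end descend back to $V$. Set $2n+1 := \dim V$ and $\Omega := d(\beta/|h|)$. Since $\Omega$ restricts to the page symplectic forms $\omega_t$ on $\Ker d\theta$ and $d\theta$ is nowhere zero on $V-K$, the $(2n+1)$-form $\Theta := d\theta\wedge\Omega^n$ is a volume form there; as the kernel field normalised by $\nu\cdot\theta = 2\pi$ obviously satisfies $\iota_\nu\Theta = (\nu\cdot\theta)\,\Omega^n - d\theta\wedge\iota_\nu\Omega^n = 2\pi\,\Omega^n$, and $\iota_{(\cdot)}\Theta$ is injective, I would take
\[
 \iota_\nu\,\Theta = 2\pi\,\Omega^n
\]
as the defining equation of $\nu$. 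Near $K$ I normalise $h$ so that the binding neighbourhood is the product $\D^2\times K$ with $h=z$, write $r:=|h|$, and pass to the oriented blowup $\wt V\to V$ along $K$, on which $r$, $\theta$ and the pullback of $\beta$ are all smooth and the exceptional divisor is $E=\S^1\times K$.

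The first point is weak smoothness. Clearing denominators, $|h|^2\Omega=|h|\,d\beta+\beta\wedge d|h|=:\gamma$ is smooth on $\wt V$, and expanding (using $(\beta\wedge d|h|)^2=0$),
\[
 \gamma^n = |h|^{n-1}M, \qquad M := |h|\,(d\beta)^n + n\,\beta\wedge d|h|\wedge(d\beta)^{n-1},
\]
with $M$ smooth on $\wt V$. Hence on $\wt V - E$ the defining equation reduces to $\iota_\nu(d\theta\wedge M)=2\pi M$. Along $E$ one has $(d\theta\wedge M)\rst E = \bigl(n\,d\theta\wedge\beta\wedge d|h|\wedge(d\beta)^{n-1}\bigr)\rst E$, which is nonzero because $\beta\rst K$ is a contact form, so $\alpha\wedge(d\alpha)^{n-1}\neq0$. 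Thus $d\theta\wedge M$ is a volume form on a neighbourhood of $E$ in $\wt V$ and $\nu$ extends smoothly across $E$; in particular $\nu$ is weakly smooth.

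Next, writing $\nu=2\pi\,\del_\theta+X$ with $X$ tangent to the pages and reading off the leading order of the equation in the model $\D^2\times K$, I expect $\nu\cdot r=O(r^2)$ and the page components of $X$ to be $O(r)$, so that $\nu$ extends continuously by zero along $K$ and $\nu\rst E=2\pi\,\del_\theta\rst E$. The genuine difficulty is to upgrade this to smoothness on $V$: a weakly smooth field need not descend (for instance $r\,\del_p$ vanishes along $E$ yet is not smooth downstairs), so I must verify the projectability criterion, equivalently that $\nu\cdot g$ is smooth on $V$ for $g=h$ and for functions pulled back from $K$. For $g=h$ this amounts to $(\nu\cdot r)\,e^{i\theta}=h\cdot O(1)$ being smooth, and for the page directions to the leading angular Fourier coefficient of $X$ being a first harmonic; both follow by extracting the relevant coefficients from the volume-form equation and using that $\beta$ and $h$ are smooth \emph{on $V$}, which forces the matching parity in the $r$-expansion. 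This descent is the main obstacle, and is exactly where smoothness of the binding form downstairs (rather than merely on the blowup) is essential. Together with the continuous vanishing this yields a smooth $\nu$ on $V$ with $\nu\rst K=0$.

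Finally, for the periodicity I would choose $\beta$ well. By Corollary \ref{c:liouforms} the ideal Liouville forms of a fixed $\omega_t$ form an affine space, and by the collar argument in the proof of Lemma \ref{l:stability} one can pick primitives $\lambda_t$ that all coincide near $K$ with the symplectization form $\lambda_\xi$ of Proposition \ref{p:lioufields}; gluing, $\beta:=|h|\,\lambda_t$ is a binding $1$-form equal near $K$ to the $\theta$-independent form $|h|\,\lambda_\xi$. For this $\beta$ the quotient $\beta/|h|=\lambda_\xi$ does not depend on the page, so $\Omega=d\lambda_\xi$ has $\del_\theta$ in its kernel and $\nu=2\pi\,\del_\theta$ near $K$. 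The time-$1$ flow of $2\pi\,\del_\theta$ rotates $\theta$ by $2\pi$ while fixing $r$ and the $K$-coordinate, hence is the identity near $K$; thus $\nu$ is $1$-periodic near $K$, as required.
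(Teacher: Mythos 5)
Your reduction of the first assertion to the equation $\iota_\nu(d\theta\wedge M)=2\pi M$ on the blowup correctly establishes \emph{weak} smoothness, but that is not what the lemma claims: a general spinning vector field is already weakly smooth (this is the setting of Remark \ref{r:smooth}), and the whole content of the first assertion is that for the kernel field of $d(\beta/|h|)$ one gets genuine smoothness \emph{on $V$}. You correctly identify the descent from $\wt V$ to $V$ as ``the main obstacle'' and then do not carry it out: controlling the leading angular Fourier coefficient of the page component of $\nu$ only constrains the $1$-jet along $K$, whereas smoothness on $V$ requires the parity/harmonicity constraints at \emph{every} order of the Taylor expansion in $r$. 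As written, the step ``both follow by extracting the relevant coefficients \dots\ which forces the matching parity'' is an assertion, not an argument, and it sits exactly where the proof has to do its work. The paper sidesteps this jet analysis entirely: it splits $\Ker\beta=\xi\oplus\xi^\bot$ near $K$ (with $\xi$ the fiberwise contact structures of the fibers $K_w=\{h=w\}$ and $\xi^\bot$ their $d\beta$-orthogonal, a contact connection over $\eps\D^2$), and exhibits $\nu=2\pi(\wt\del_\theta+ar\wt\del_r+b\,\del_\alpha)$ where $\wt\del_\theta$ and $r\wt\del_r$ are horizontal lifts of the vector fields $x\del_y-y\del_x$ and $x\del_x+y\del_y$ (smooth on $V$ and vanishing along $K$) and $a,b$ are explicit rational expressions in $d\beta$ evaluated on this frame, manifestly smooth with denominator $1$ along $K$. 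If you want to keep your blowup strategy you must either reproduce an expansion to all orders or, better, produce such a frame of vector fields smooth downstairs.

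For the second assertion your route is genuinely different from the paper's and is attractive in spirit, but it also leaves the hard point unaddressed. You want $\beta/|h|$ to equal, near $K$, the $\theta$-independent pullback of $\lambda_\xi$; this requires (i) that the collar embeddings $\iota_{\lambda_t}$ of Proposition \ref{p:lioufields} for all pages assemble into a single product neighborhood $\eps\D^2\times K$ in which $r=|h|$ is simultaneously the symplectization coordinate of every $\lambda_t$ (a compatibility with the \emph{fixed} defining map $h$ that you have not arranged), and (ii) that the resulting glued $1$-form is smooth across $K$ --- which is again the delicate point of the whole lemma. The paper's construction avoids both issues: starting from an arbitrary binding form $\beta$ it sets $\beta':=\beta-\rho\,\beta(\del_\theta)\,d\theta$, which agrees with $\beta$ on every page (hence is automatically a binding form for the same $h$, once one checks that $\beta(\del_\theta)\,d\theta$ is smooth) and reduces near $K$ to $f\,\pi^*\alpha_0$, whose spinning field is tangent to the level sets of $f/|h|$ and therefore $1$-periodic. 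I would recommend adopting that subtraction trick rather than reconstructing $\beta$ from scratch.
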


Note that binding forms associated with any fixed defining map $h$ constitute an
affine space. Another thing to be mentioned here is that, among symplectically
spinning vector fields, those associated with binding $1$-forms generate exact
symplectic diffeomorphisms (see our comment following Proposition \ref{p:tw}).
%% eg - Indeed, in the fundamental domain $[0,1] \times F$, the form $\beta/|h|$
%% reads $\lambda_t + g_t dt$. Hence, the kernel of $d(\beta/|h|)$ is spanned by
%% $\del_t$ if and only if $\dot\lambda_t = -dg_t$, and this condition implies
%% $\phi^*\lambda_0 - \lambda_0 = - d\bigl(\int_0^1 g_t dt\bigr)$.  
 
\begin{proof}
First observe that $\beta \rst K$ is a contact form and defines the (common)
boundary contact structure of all ideal Liouville pages. We fix a small $\eps>0$
such that $\beta$ induces a contact form on every fiber $K_w := \{h=w\}$, $|w|
\le \eps$. We set $\alpha_w := \beta \rst{ K_w }$ and $N := \{|h| \le \eps\}$.
The hyperplane field $\tau := \Ker (\beta \rst N)$ splits as a direct sum $\tau
= \xi \oplus \xi^\bot$, where  $\xi$ is the subdistribution consisting of the 
contact structures $\xi_w := \Ker \alpha_w$, $|w| \le \eps$, and $\xi^\bot$ is
the $d\beta$-orthogonal complement of $\xi$ in $\tau$ (and determines a contact
connection over $\eps \D^2$). Now consider the following vector fields on $N$:
\begin{itemize}
\item
$\del_\alpha$ is the vector field in $\Ker dh$ whose restriction to each fiber
$K_w$ of $h$ is the Reeb field $\del_{\alpha_w}$ of $\alpha_w$.
\item
$\wt\del_\theta$ and $\wt\del_r$ are the vector fields in $\xi^\bot$ projecting
to $\del_\theta$ and $\del_r$, respectively, where $(r,\theta)$ denote polar
coordinates in $\eps \D^2$.
\end{itemize}
A routine calculation shows that the vector field $\nu$ on $V-K$ spanning the
kernel of $d(\beta/|h|)$ and satisfying $\nu \cdot \theta = 2\pi$ is given in
$N$ by
$$ \nu = 2\pi (\wt\del_\theta + a r \wt\del_r + b \del_\alpha), $$
where $r = r \circ h = |h|$ while 
$$ a := \frac{ d\beta (\wt\del_\theta, \del_\alpha) }
   { 1 + d\beta  (\del_\alpha, r \wt\del_r) } \quad \text{and} \quad
   b := \frac{ d\beta (r \wt\del_r, \wt\del_\theta) }
   { 1 + d\beta  (\del_\alpha, r \wt\del_r) }. $$
Clearly, $a$ and $b$ are smooth functions on $N$ and vanish identically along
$K$, so $\nu$ has the desired smooth extension on $V$.

We will now modify $\beta$ to obtain a binding form $\beta'$ such that the
spinning vector field $\nu'$ spanning the kernel of $d(\beta'/|h|)$ in $V-K$ is
$1$-periodic near $K$. First, we trivialize $N$ as a product $N = \eps \D^2
\times K$ so that, in the corresponding cylindrical coordinates $(r,\theta,q)$,
the vector field $\del_r$ lies in $\xi^\bot$. In other words, $\del_r$ equals
$\wt\del_r$ and, along $K = \{0\} \times K$, the $2$-plane field $\xi^\bot$ is
horizontal (namely, tangent to the disks $\D^2 \times \{q\}$, $q \in K$). It is
then an exercise to check that $\beta(\del_\theta)\, d\theta$ is a smooth form
in $N$. Now pick a function $\rho \map V \to [0,1]$ compactly supported in $N$
%% eg - Write $\beta \rst N = \alpha_w + f\,dx + g\,dy$, $w = (x,y) \in \D^2$.
%% Since $\beta(r\del_r) = 0$, the functions $f, g$ satisfy $xf+yg = 0$. Hence,
%% there exists a smooth function $v$ such that $f = yv$ and $g = xv$, and so
%% $\beta (\del_\theta) = v (x^2+y^2) = v r^2$.
and equal to $1$ near $K$, and let
$$ \beta' := \beta - \rho\, \beta(\del_\theta)\, d\theta. $$
This smooth $1$-form on $V$ coincides with $\beta$ on every page, so it is a
binding form. Moreover, near $K$,
$$ \beta' = \beta - \beta(\del_\theta)\, d\theta = f\, \pi^*\alpha_0 $$
where $f$ is a positive function, $\pi$ the projection $N = \eps \D^2 \times K
\to K$ and $\alpha_0$ the restriction of $\beta$ to $K$. It follows that the
spinning vector field $\nu'$ spanning the kernel of $d(\beta'/|h|)$ is horizontal
(in the product structure of $N$) and tangent to the level sets of the function
$f/|h|$. Therefore, $\nu'$ is $1$-periodic.
\end{proof}

A practical consequence of this lemma is:

\begin{lemma}[Criterion for Smooth Generation] \label{l:rel}
Any representative of the symplectic monodromy of a Liouville open book which is
the identity near the boundary is generated by a smooth symplectically spinning
vector field.
\end{lemma}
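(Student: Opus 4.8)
The plan is to start from the distinguished representative produced by Lemma~\ref{l:smooth} and then deform its generating field, in the interior only, until its return map becomes the prescribed $\phi$. First I would choose a binding $1$-form $\beta$ as in Lemma~\ref{l:smooth}, so that the associated symplectically spinning vector field $\nu_0$ is smooth on $V$ and $1$-periodic near $K$; its time-$1$ return map $\phi_0$ on $(F,\omega)=(F_0,\omega_0)$ is then a smoothly generated representative which is the identity near $K$. Since the given $\phi$ is likewise the identity near $K$ and represents the same class of $\MCG(F,\omega)=\pi_0\DDc(F,\omega)$, the composite $g:=\phi_0^{-1}\circ\phi$ is a symplectomorphism of $(F,\omega)$, equal to the identity near $K$, that is isotopic to the identity rel $K$. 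I would fix a symplectic isotopy $g_s$ $(s\in[0,1])$ in $\DDc(F,\omega)$ with $g_0=\id$, $g_1=g$, and let $X_s$ be the time-dependent symplectic vector field on $(\Int F,\omega)$ generating it.

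Next I would splice this isotopy into a single turn of the open book. Pick a short \emph{gate} $[\theta_1,\theta_2]\subset(0,2\pi)$ and a smooth nondecreasing reparametrisation $\sigma\map[\theta_1,\theta_2]\to[0,1]$ that is constant ($=0$, resp.\ $=1$) near the two endpoints. Let $\wt X_s$ be the horizontal lift of $X_s$ to $V-K$ determined by the connection $\nu_0$ (the unique page-tangent lift), and set
$$ \zeta := \sigma'(\theta)\,\wt X_{\sigma(\theta)} \ \text{ on } \{\theta_1\le\theta\le\theta_2\}, \qquad \zeta:=0 \ \text{ elsewhere.} $$
Then $\nu:=\nu_0+\zeta$ still satisfies $\nu\cdot\theta=2\pi$, and $\zeta$ is tangent to the pages and fibrewise symplectic (each $X_s$ preserves $\omega$, and $\nu_0$-parallel transport identifies $(F_0,\omega_0)$ with $(F_t,\omega_t)$ symplectically); hence the flow of $\nu$ carries $F_t$ to $F_{t+s}$ by a symplectomorphism, i.e.\ it preserves the family $(\omega_t)$, which by the equivalent formulation recalled after Definition~\ref{d:lob} says that $\nu$ is again symplectically spinning. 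A routine computation of the return map shows that the holonomy of $\nu$ over one period is the composite of $\phi_0$ with the gate holonomy $g_1=g$, which in the appropriate order equals $\phi=\phi_0\circ g$. Thus, \emph{provided $\nu$ is genuinely smooth on $V$}, it is the desired smooth symplectically spinning vector field.

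The one delicate point — and the main obstacle — is smoothness of $\nu$ across the binding. Since $\nu_0$ is smooth and $1$-periodic near $K$, it suffices that $\zeta$ vanish near $K$, which forces the $X_s$ to vanish near $K$: the isotopy $g_s$ must be the identity near $K$, not merely fix $K$ pointwise. A priori $g_s$ is only tangent to the identity along $K$ (by the consequence of Proposition~\ref{p:contact}), so $X_s$ vanishes on $K$ but its horizontal lift need not extend smoothly over the blowdown. This is the symplectic counterpart of the easy diffeomorphism fact used in Remark~\ref{r:smooth}, and it is exactly where work is required. I would remove the defect using the collar model of Proposition~\ref{p:lioufields}: near $K$ the structure is the standard symplectization $\ol{SK}$, on which the ideal Liouville field $\dvf\lambda$ is complete and expanding. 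Because $g=g_1$ is already the identity near $K$, conjugating $g_s$ by the flow of $\dvf\lambda$ pushes its interior support arbitrarily far from $K$ rel endpoints, yielding an isotopy from $\id$ to $g$ — in the same class — that is stationary on a fixed collar of $K$. The resulting $X_s$ then vanish near $K$, so $\zeta$ is supported in a compact part of the interior and $\nu$ is smooth. Establishing this ``fixing-a-collar'' improvement of the isotopy, via completeness and expansion of $\dvf\lambda$, is the step I expect to be the crux of the proof.
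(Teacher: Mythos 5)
Your architecture is the same as the paper's: start from the representative $\phi_0$ generated by the $1$-periodic spinning field of the last assertion of Lemma~\ref{l:smooth}, write $\phi=\phi_0\circ g$ with $g\in\DDc(F,\omega)$ equal to the identity near $K$ and isotopic to the identity rel $K$, and splice an isotopy from $\id$ to $g$ into one turn of the spinning field through a gate in $\theta$. The splicing step and the verification that the spliced field is still symplectically spinning are fine, and you correctly locate the entire content of the lemma in the step you call the crux: upgrading the isotopy $g_s$, a priori only relative to $K$ pointwise, to one that is stationary on a fixed collar of $K$. This is precisely the ``fact'' invoked in the paper's (very terse) proof.

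However, the mechanism you propose for that step does not work. Conjugating by the time-$T$ flow $\Phi^T$ of $\dvf\lambda$ replaces each $g_s$ by $\Phi^{-T}\circ g_s\circ\Phi^{T}$, whose fixed-point set is $\Phi^{-T}(\mathrm{Fix}(g_s))$. Since $\Phi^{-T}$ fixes $K$ pointwise and carries neighborhoods of $K$ to neighborhoods of $K$, conjugation enlarges the collar of fixed points only for the endpoint $g_1=g$, whose fixed set already contains a collar; for an intermediate $g_s$ whose fixed set is exactly $K$, the conjugate is again the identity only on $K$ and its support still accumulates on $K$. So no finite amount of conjugation yields an isotopy stationary near $K$, and taking $T\to\infty$ gives at best a pointwise limit, not an isotopy. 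The paper's suggested mechanism is different: apply Proposition~\ref{p:lioufields} to the path of ideal Liouville forms $\lambda_s:=g_s^*\lambda$, observe that by uniqueness of the collar embeddings $\iota_{\lambda_s}=g_s^{-1}\circ\iota_{\lambda}$, and run the cut-off-Hamiltonian construction from the proof of Corollary~\ref{c:exact} to produce a global symplectic isotopy $\psi_s$ that coincides with the forced model $\iota_{\lambda_s}\circ\iota_{\lambda_0}^{-1}=g_s^{-1}$ on a neighborhood of $K$; the composition $g_s\circ\psi_s$ is then stationary near $K$, and the discrepancy at $s=1$ is an exact symplectomorphism supported in $\Int F$ that can be absorbed by a compactly supported Hamiltonian isotopy. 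Until you carry out this (or an equivalent) argument, your proof has a genuine gap at its central step.
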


\begin{proof}
This follows from the last assertion of Lemma \ref{l:smooth} and the fact that,
if two symplectic diffeomorphisms of an ideal Liouville domain $(F,\omega)$
coincide with the identity near $K := \del F$ and represent the same class in
$\MCG(F,\omega)$, then they are connected by a symplectic isotopy relative to a
neighborhood of $K$ (an easy way to construct such an isotopy is to use the
embeddings of Proposition \ref{p:lioufields} as in the proof of Corollary \ref
{c:exact}). 
%% eg - This criterion can alternatively be derived from the construction of
%% Liouville open books (next proposition). 
\end{proof}

The next proposition is a variation on a wellknown construction first introduced
by Thurston--Winkelnkemper in three dimensions \cite{TW} and extended to higher
dimensions in \cite{Gi2}:

\begin{proposition}[Construction of Liouville Open Books] \label{p:tw}
Consider an ideal Liouville domain $(F,\omega)$ and a symplectic diffeomorphism
$\phi \map F \to F$ relative to $K := \del F$. The open book in $\OB(F,\phi)$ is
a Liouville open book for which the parametrization of its $0$-page by $F$ is a
symplectomorphism.
\end{proposition}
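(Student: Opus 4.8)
The plan is to realize $\OB(F,\phi)$ as a symplectic mapping torus over $\S^1$ and to read off the page structures and the binding form from an ideal Liouville form on $(F,\omega)$. First I would record the global $2$-form: on $\R\times F$ the pullback $p_2^*\omega$ of $\omega$ under the second projection has $\del_t$ in its kernel and restricts to $\omega$ on each slice, and since $\phi^*\omega=\omega$ it is invariant under the deck transformation $(t,p)\mapsto(t-1,\phi(p))$. Hence it descends to a closed $2$-form $\Omega$ on $\Int\MT(F,\phi)=V-K$ whose kernel is spanned by the spinning field $\del_t$ and which restricts to a symplectic form $\omega_t$ on every page $F_t$. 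By construction the parametrization of the $0$-page $F_0$ by $F$ carries $\omega_0$ to $\omega$, so it is a symplectomorphism, and each $(F_t,\omega_t)$ is isomorphic to $(F,\omega)$, hence an ideal Liouville structure. This settles the page data except for the primitives and the binding form.

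Next I would produce a compatible family of ideal Liouville forms. Fix an ideal Liouville form $\lambda$ on $(F,\omega)$. Then $\phi^*\lambda$ is again an ideal Liouville form: it is a primitive of $\phi^*\omega=\omega$, and $(u\circ\phi)\,\phi^*\lambda=\phi^*(u\lambda)$ is smooth and contact on $K$ for the defining function $u\circ\phi$. Since ideal Liouville forms constitute an affine space (Corollary \ref{c:liouforms}), the segment $\lambda_t:=(1-t)\lambda+t\,\phi^*\lambda$, $t\in[0,1]$, is a path of such forms, which I extend to all $t$ by the rule $\lambda_{t+1}=\phi^*\lambda_t$. This family descends to the mapping torus and yields, for each page $F_t$, an ideal Liouville form $\lambda_t$ with $d\lambda_t=\omega_t$.

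The remaining and central task is to glue the page forms $|h|\,\lambda_t$ into one globally smooth binding $1$-form $\beta$; the only delicate point is smoothness across the binding $K$. Here I would first simplify the picture near $K$. Since $\OB(F,\phi)$ depends only on the smooth isotopy class of $\phi$ rel $K$ (Step~1 of the $\OB$ construction), and since every element of $\DDc(F,\omega)$ is symplectically isotopic rel $K$ to one equal to the identity near $K$ --- using that $\phi$ is tangent to $\id$ along $K$ (the consequence of Proposition \ref{p:contact}) together with the collar model of Proposition \ref{p:lioufields} and the exact-isotopy technique of Corollary \ref{c:exact} --- I may assume $\phi=\id$ on a neighborhood of $K$. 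Then the path $\lambda_t$ can be taken constant $=\lambda$ there, and by Proposition \ref{p:lioufields}(b) I may further normalize $\lambda$ on a collar $[0,\eps)\times K$ to the standard model $\lambda=\alpha/r$, where $\alpha:=(u\lambda)\rst K$ is the boundary contact form and $r=|h|$ for a defining map $h$ with $|h|=r$ near $K$.

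Finally I would assemble $\beta$. On the collar $N=\eps\D^2\times K$ set $\beta:=\pi^*\alpha$ for the projection $\pi\map N\to K$; this is smooth across the binding and restricts on each page to $\alpha=r\lambda_t=|h|\,\lambda_t$. Away from $K$, using a horizontal distribution for the fibration $\theta$, I build a smooth $1$-form restricting to $|h|\,\lambda_t$ on the fibers $F_t-K$, and I patch the two with a cutoff $\rho$ supported in $N$ and equal to $1$ near $K$; on the overlap both pieces restrict to $|h|\,\lambda_t$ on fibers, so the convex combination does too, and the resulting $\beta$ is globally smooth and restricts to $|h|\,\lambda_t$ on every page. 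This exhibits $(K,\theta,\omega_t)$ as a Liouville open book with binding form $\beta$, the $0$-page parametrization being a symplectomorphism, which completes the proof. The main obstacle is exactly the across-binding smoothness of $\beta$: its substance is that at $r=0$ all the forms $r\lambda_t$ share the common leading term $\alpha$ (because $\phi=\id$ on $K$), the local computation being the reverse of the one in Lemma \ref{l:smooth}; the reduction to $\phi=\id$ near $K$ is what makes this transparent.
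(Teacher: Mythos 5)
Your proof is correct and follows essentially the same route as the paper: reduce to the case $\phi = \id$ near $K$ via Corollary \ref{c:exact}, arrange the collar so that $u\lambda$ is the pullback of the boundary contact form $\alpha$, and observe that the binding form equals $\pi^*\alpha$ near $K$. The only detail to add is that your family $\lambda_t = (1-t)\lambda + t\,\phi^*\lambda$, extended by $\lambda_{t+1} = \phi^*\lambda_t$, is in general only Lipschitz in $t$ at integer values, so you should reparametrize the path to be constant near its endpoints (as the paper does with its path $f_t$) to obtain a $\beta$ that is smooth across the $0$-page.
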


The proof below actually shows that, if the symplectic diffeomorphism $\phi$ is
the identity near $K$ and is exact (meaning that there exists an ideal Liouville
form $\lambda$ such that $\phi^*\lambda - \lambda$ is the differential of a 
function with compact support in $\Int F$), then $\phi$ is (smoothly) generated
by the spinning vector field of a binding $1$-form. 
%% eg - Recall that a symplectic diffeomorphism $\phi$ of an ideal Liouville
%% domain $(F,\omega)$ which is the identity near $\del F$ is exact if and only
%% if the class of the closed $1$-form $\phi^*\lambda_0 - \lambda_0$ (computed
%% with an arbitrary ideal Liouville form $\lambda_0$, and which has compact
%% support in $\Int F$) lies in the image of the homomorphism $\phi^* - \id$
%% acting on the cohomology with compact support in $\Int F$.

\begin{proof}
Let $\lambda_t$ be a path of ideal Liouville forms on $(F,\omega)$ joining an
arbitrary $\lambda_0$ to $\lambda_1 := \phi^*\lambda_0$. According to Corollary
\ref{c:exact}, there is a symplectic isotopy $\psi_t$ of $F$, relative to $K$,
such that $\psi_0 = \id$ and $\psi_t^*\lambda_t - \lambda_0 = df_t$ for all $t
\in [0,1]$, where the functions $f_t$ have compact supports in $\Int F$. Then
the symplectic isotopy $\phi_t := \phi \circ \psi_t$ is relative to $K$ and
connects $\phi = \phi_0$ to a symplectic diffeomorphism $\phi_1$ which is exact
and coincides with the identity near $K$. Since $\OB(F,\phi)$ depends only on
the (smooth) mapping class of $\phi$, we assume from now on that $\phi$ is exact
and is the identity on a neighborhood of $K$. 

We now pick an ideal Liouville form $\lambda$ such that $\phi^*\lambda = \lambda
+ df_1$, where $f_1$ is a function with compact support in $\Int F$, and we
choose a path of functions $f_t$ ---~all with compact supports in $\Int F$~---
joining $f_0 := 0$ to $f_1$. Then the $1$-form $\lambda + df_t + \dot f_tdt$ on
$[0,1] \times \Int F$ is a primitive of (the pullback of) $\omega$ and descends
to a $1$-form $\wh\beta$ on $\Int \MT(F,\phi)$ (to ignore smoothing issues, take
the path $f_t$ to be constant near its endpoints).

The next step is to fix cylindrical coordinates near $K$ in $\OB(F,\phi)$ and a
map $h \map \OB(F,\phi)$ defining the obvious open book. We pick a non-negative
function $u$ on $F$, with regular level set $K = \{u=0\}$, such that:
\begin{itemize}
\item
$\dvf\lambda \cdot \log u = -1$ in  a neighborhood of $K$ (equivalently, the Lie
derivative $\dvf\lambda \cdot (u\lambda)$ is zero), and
\item
$u \circ \phi = u$ (this property is typically satisfied if $u$ is constant on
the support of $\phi$).
\end{itemize}
Then the map  
$$ (t,p) \in [0,1] \times F \mapsto u(p) e^{2i\pi t} \in \C $$ 
provides the required defining map $h \map \OB(F,\phi) \to \C$. Furthermore, the
function $u$ and the collar neighborhood of $K$ associated with $\lambda$ (cf.\ 
Proposition \ref{p:lioufields}) provide cylindrical coordinates near $K$. More
precisely, let $G := \{u \le \eps\} \subset F$ with $\eps$ small enough that
$\dvf\lambda \cdot \log u = -1$ on $G$ and $G$ is disjoint from the supports of
$\phi$ and of all functions $f_t$ $(t \in [0,1])$. Then the function $u$ and the
foliation spanned by $\dvf\lambda$ identify $G$ with $[0,\eps] \times K$. In the
same way, $N := \{|h| \le \eps\}$ is identified with $\eps \D^2 \times K$. 

It remains to see that the form $|h|\,\wh\beta$ on $\OB(F,\phi) - K$ extends to a
smooth (binding) form on $\OB(F,\phi)$. In fact, the form $u\lambda \rst{ G-K }$
is invariant under the flow of $\dvf\lambda$, so it is the pullback on $(0,\eps]
\times K$ of a $1$-form $\alpha$ on $K$. Similarly, the form $|h|\,\wh\beta \rst
{ N-K }$ is the pullback on $(\eps\D^2 - \{0\}) \times K$ of the same $1$-form
$\alpha$ on $K$. Hence, it extends smoothly across $K$.    
\end{proof}

Now the most obvious relationship between supporting and Liouville open books
is:

\begin{proposition}[Supporting Open Book are Liouville] \label{p:supliouv}
Let $(V,\xi)$ be a closed contact manifold, and $(K,\theta)$ a supporting open
book with defining map $h \map V \to \C$. Then the equations $\alpha$ of $\xi$
such that $d(\alpha/|h|)$ induces an ideal Liouville structure on each page form
a non-empty convex cone.
\end{proposition}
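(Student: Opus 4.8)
The plan is to prove Proposition~\ref{p:supliouv} by starting from an adapted contact form and showing that the ``ideal Liouville on pages'' condition is both achievable and stable under the relevant deformations, which forces the solution set to be a non-empty convex cone.

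\medskip

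\emph{Non-emptiness.} First I would fix an equation $\alpha_0$ of $\xi$ adapted to $(K,\theta)$ in the sense of Definition~\ref{d:sob}, so that $\alpha_0$ induces a positive contact form on $K$ and $d\alpha_0$ induces a positive symplectic form on each page fibre. The key point is that being adapted is close to, but not literally the same as, producing an ideal Liouville structure on each page: I need $d(\alpha_0/|h|)$ rather than $d\alpha_0$ to be symplectic on $F_t-K$, and I need the form $|h|\cdot(\alpha_0/|h|)=\alpha_0$ restricted to each page to extend smoothly across $K$ as a contact form (which is exactly the first bullet of Definition~\ref{d:sob}). A direct computation, entirely analogous to the one in Example~\ref{x:ild2} and in the proof of Proposition~\ref{p:contact}, shows that for an adapted $\alpha_0$ the form $d(\alpha_0/|h|)$ is symplectic on each page near $K$; away from $K$, however, $d(\alpha_0/|h|)$ and $d\alpha_0$ can differ significantly. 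The standard fix is to rescale: replace $\alpha_0$ by $e^{g}\alpha_0$ for a suitable function $g$ on $V$, chosen so that the rescaled form is still an equation of $\xi$ (automatic, since rescaling by a positive function preserves $\Ker$) and so that the page restrictions of $d(e^g\alpha_0/|h|)$ become symplectic everywhere. Concretely, on each page away from the binding the condition that $d(\alpha/|h|)$ be symplectic is an open condition on the $1$-jet of $\log$ of the conformal factor along the page Liouville field, exactly the cone condition $\dvf\lambda\cdot\log u<1$ encountered in Example~\ref{x:ild2}; I would exhibit a $g$ (for instance, a large positive multiple of $\log|h|^{2}$ truncated away from $K$, or a function growing fast enough along the pages) that pushes the relevant quantity into the admissible range on the compact manifold $V$. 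This produces at least one $\alpha$ in the set, establishing non-emptiness.

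\medskip

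\emph{Convex cone.} The cone property is the formal part. If $\alpha$ lies in the set then so does $c\,\alpha$ for every constant $c>0$, since $d(c\alpha/|h|)=c\,d(\alpha/|h|)$ is again symplectic on each page and $c\alpha\rst K$ is again a positive contact form; this gives invariance under positive scaling. For convexity I would take two admissible equations $\alpha_0=e^{g_0}\alpha$ and $\alpha_1=e^{g_1}\alpha$ of the \emph{same} $\xi$ (any equation of $\xi$ is a positive multiple of any other) and consider the convex combination $(1-s)\alpha_0+s\alpha_1$. This is again a positive multiple of $\alpha$, hence an equation of $\xi$ and a positive contact form on $K$; the only thing to check is that its page restrictions remain ideal Liouville. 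Here I would use that, on each fixed page $F_t-K$, the set of primitives $\lambda$ whose $d\lambda$ is symplectic and which define the given boundary contact structure is itself convex --- this is precisely the content of Corollary~\ref{c:liouforms}, which says the ideal Liouville forms constitute an affine space, together with the observation that the symplectic (rather than merely closed) forms among them form a convex open subset. Since $(1-s)\alpha_0+s\alpha_1$ restricts on each page to a convex combination of two ideal Liouville forms inducing the same $\xi_{\del}$, Corollary~\ref{c:liouforms} guarantees the combination is again an ideal Liouville form on that page, and the extension of $|h|\lambda_t$ across $K$ is manifestly preserved under convex combination because it is a linear (fibrewise) condition on $\beta$. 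Hence the set is convex.

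\medskip

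\emph{Expected main obstacle.} The routine part is convexity; the real work is non-emptiness, and specifically producing the conformal factor $g$ uniformly over all of $V$. The subtlety is that the Liouville vector field of $\alpha/|h|$ on a page depends on the page, degenerates as one approaches the binding, and the quantity one must control --- the analogue of $\dvf\lambda\cdot\log u$ --- involves how $g$ varies both along and transverse to the pages. I expect the delicate step to be checking that a single global $g$ simultaneously (i) keeps the boundary contact condition on $K$ intact, (ii) makes every page symplectic away from $K$, and (iii) does not destroy the smooth extension of $u\lambda_t$ across the binding that underlies the Liouville open book structure. The cleanest route is probably to work near $K$ using the normal-form/cylindrical coordinates of Lemma~\ref{l:smooth} (where adaptedness already forces the ideal Liouville condition) and to use a cutoff so that $g$ is supported away from $K$, reducing the problem to the compact region $\{|h|\ge\eps\}$ where the page Liouville fields are uniformly transverse and a sufficiently fast-growing $g$ does the job by a compactness argument.
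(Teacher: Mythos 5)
Your overall architecture --- rescale an adapted equation to get non-emptiness, then check a cone condition --- is the right way to flesh out the paper's one-sentence proof, which simply defers to the ideal-completion construction of Example~\ref{x:ild2}. But your convexity step has a genuine gap. Corollary~\ref{c:liouforms} describes the affine space of ideal Liouville forms of a \emph{single fixed} structure $\omega$: two such forms differ by a closed $1$-form. Here, if $\alpha_0 \ne \alpha_1$ are two admissible equations, then by Remark~\ref{r:unique} the forms $\alpha_0/|h|$ and $\alpha_1/|h|$ restrict on some page to primitives of two \emph{different} symplectic forms, so the corollary says nothing about their convex combinations; and your fallback claim that the $1$-forms with symplectic differential inducing a given boundary contact structure form a convex set is unjustified (a convex combination of two symplectic forms need not be nondegenerate when $\dim \ge 4$). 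The correct argument is pointwise and elementary: write $\alpha_1 = v\,\alpha_0$ with $v>0$ and set $\lambda := \alpha_0/|h|$ on a page $F_t - K$; then $\bigl(d(v\lambda)\bigr)^n = v^{n-1}\bigl(v + \dvf\lambda \cdot v\bigr)\,(d\lambda)^n$, so admissibility of $v\alpha_0$ amounts to $v>0$ together with $v + \dvf\lambda \cdot v > 0$ on every page, and the latter condition is \emph{linear} in $v$. Hence the admissible equations form a convex cone (the boundary condition is automatic, since $v\alpha_0 \rst K$ is a positive multiple of a contact form). This linearity is exactly what the paper means when it notes in Example~\ref{x:ild2} that the admissible functions form a convex cone.

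For non-emptiness your strategy is sound but your candidate conformal factor points the wrong way. With $\alpha = e^g\alpha_0$ and $\lambda_0 := \alpha_0 \rst{F_t}$, the condition is $\dvf{\lambda_0} \cdot g > \dvf{\lambda_0} \cdot \log|h| - 1$. Taking $g = N\log|h|^{2} = 2N\log|h|$ away from $K$ turns this into $(2N-1)\,\dvf{\lambda_0} \cdot \log|h| > -1$, which for large $N$ \emph{fails} wherever $\dvf{\lambda_0} \cdot \log|h| < 0$ in the interior of a page --- and nothing prevents that (the page Liouville field of an adapted form can point toward decreasing $|h|$ away from the binding). The choice that works is $e^g = |h|$ away from the binding, so that $\alpha/|h| = \alpha_0$ there and adaptedness does the job, interpolated to a positive constant near $K$: writing $g = \chi(\log|h|)$ with $0 \le \chi' \le 1$, the required inequality $\chi' x > x - 1$ (where $x := \dvf{\lambda_0} \cdot \log|h|$) holds identically when $\chi' = 1$ and holds for any $\chi' \in [0,1]$ provided $x < 1$, which is guaranteed near $K$ where $x \to -\infty$. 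This is the inverse of the cut-off trick the paper uses just after Remark~\ref{r:unique}, and it is where the actual content of ``an adapted equation can be corrected to a binding equation'' lives.
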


\begin{proof}
This follows readily from uniqueness of ideal completions of (usual) Liouville
domains (see Example \ref{x:ild2}).
\end{proof}

An equation $\alpha$ of $\xi$ as in the above proposition yields ideal Liouville
structures $\omega_t$ on the fibers of $\theta$, and $(K,\theta,\omega_t)$ is a
Liouville open book. By Lemma \ref{l:smooth}, the kernel of $d(\alpha/|h|)$ is
spanned by a smooth symplectically spinning vector field $\nu$, but it is easy 
to verify that $\nu$ is never $1$-periodic near $K$. Though it may create some
psychological disconfort, this inconvenience is not a problem. It could in fact
be remedied by replacing Liouville open books with open books whose pages are
given ``degenerate ideal Liouville structures'' (to define those objects, take
Definition \ref{d:ild} and simply substitute $u\lambda$ with $u^2\lambda$ in the
extension condition). In short, the key observation here is that, if we consider
for instance the contact form $\alpha := dz + r^2 d\theta$ in $3$-space (with
cylindrical coordinates $(r,\theta,z)$) then, away from the $z$-axis, the Reeb
field of $\alpha/r^2$ is $\del_\theta$ while the Reeb field of $\alpha/r$ is 
proportional to the vector field $\del_\theta + r^2 \del_z$.   

Proposition \ref{p:supliouv} leads to a new definition:

\begin{definition}[Liouville Open Books and Contact Structures]
Let $(K,\theta,\omega_t)$ be a Liouville open book on a closed manifold $V$, and
$h \map V \to \C$ a map defining $(K,\theta)$. A contact structure on $V$ is
(\emph{symplectically}) \emph{supported} by $(K,\theta,\omega_t)$ if it admits
a binding equation on $V$, that is, an equation $\alpha$ such that $\alpha/|h|$
induces an ideal Liouville form on each ideal Liouville page $(F_t,\omega_t)$
$(2\pi t \in \S^1)$.
\end{definition}

\begin{remark}[Uniqueness of the Binding Equation] \label{r:unique}
If it exists, the above equation $\alpha$ is unique (the defining map $h$ being
fixed). The underlying more general assertion is that, given an ideal Liouville
domain $(F,\omega)$ and an ideal Liouville form $\lambda$, the constant function
$1$ is the only function $f$ on $\Int F$ such that $d(f\lambda) = \omega$. For
$\dim F \ge 4$, the reason is purely algebraic: $f\omega$ and $\omega$ must
agree on the kernel of $\lambda$, which contains an $\omega$-symplectic space;
hence $f$ has to equal $1$. If $\dim F = 2$, non-constant solutions $f$ exist
locally, so we need a more global argument. Since
$$ d(f\lambda) = f\omega + df \wedge \lambda
 = (f + \dvf\lambda \cdot f) \omega, $$
the condition $d(f\lambda) = \omega$ reads $g + \dvf\lambda \cdot g = 0$, where 
$g := f-1$. Now any non-zero solution $g$ of this equation has to be unbounded
on every complete non-trivial orbit  of $\dvf\lambda$. The claim then follows
from $\dvf\lambda$ being complete (Proposition \ref{p:lioufields}). 
\end{remark}

If a contact structure is (symplectically) supported by a Liouville open book
then it is supported by the underlying smooth open book: To obtain an adapted 
equation in the sense of Definition \ref{d:sob}, simply replace $\alpha/|h|$ by
$\alpha/u(|h|)$ where $u \map \R_{\ge0} \to \R_{>0}$ is an increasing function
such that $u(x) = x$ for $x \ge \eps$ and $u(x) = x^2+\eps^2$ for $x \le \eps/2$
(with $\eps$ sufficiently small). 

\medskip

We now conclude this paper by showing that the inclusion of the space of contact
structures supported by a Liouville open book into the affine space of binding
forms is a weak homotopy equivalence:

\begin{proposition}[Existence and Uniqueness of Supported Contact Structures]
On a closed manifold, contact structures supported by a given Liouville open
book form a non-empty and weakly contractible subset in the space of all contact
structures. In particular, they lie in a unique isotopy class.
\end{proposition}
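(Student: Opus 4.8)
The plan is to show both non-emptiness and weak contractibility by constructing, for any given sphere of binding forms, a contractible space of compatible contact structures, using the convex-cone structure established earlier.

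\medskip

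First I would set up the two maps in play. Fix a defining map $h \map V \to \C$ for the open book $(K,\theta)$. A contact structure supported by the Liouville open book is (by the preceding definition) one admitting a binding equation $\alpha$, that is, an equation with $\alpha/|h|$ an ideal Liouville form on each page; and by Remark \ref{r:unique} this equation $\alpha$ is \emph{unique} once $h$ is fixed. Thus the assignment $\xi \mapsto \alpha$ is a well-defined injection from supported contact structures into the affine space $\mathcal{B}$ of binding $1$-forms, and $\alpha$ in turn recovers the ideal Liouville page data via $\alpha/|h|$. The strategy is therefore to study the inclusion of supported contact structures into $\mathcal{B}$ and prove it is a weak homotopy equivalence; since $\mathcal{B}$ is an affine space, hence contractible, this immediately yields that the space of supported contact structures is weakly contractible, and in particular path-connected, so Gray stability delivers the final ``unique isotopy class'' clause.

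\medskip

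For non-emptiness I would invoke Proposition \ref{p:supliouv}: starting from \emph{any} contact structure adapted to the underlying smooth open book (which exists by the classical Thurston--Winkelnkemper/Giroux construction, recovered here as Proposition \ref{p:tw} applied to an ideal Liouville page and its monodromy), that proposition produces a non-empty convex cone of equations $\alpha$ with $d(\alpha/|h|)$ an ideal Liouville structure on each page. The core of the argument is then the weak homotopy equivalence. Given a binding form $\beta \in \mathcal{B}$, I want to produce a canonical supported contact equation ``near'' it. The mechanism should be: $\beta$ restricts on each page $F_t - K$ to $|h|\,\lambda_t$ for ideal Liouville forms $\lambda_t$, and I seek $\alpha$ of the form $\beta + (\text{a }d\theta\text{-type correction})$ making $\ker\alpha$ a genuine contact structure while keeping $\alpha/|h|$ an ideal Liouville form on each page. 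Concretely one adds a large multiple of the pullback of the binding contact form in the $\theta$-direction, exactly as in the Thurston--Winkelnkemper stabilization; the contact condition $\alpha \wedge (d\alpha)^n > 0$ becomes, after the $u^{n+1}$-rescaling computation of Proposition \ref{p:lioufields}, a strict inequality that holds once the $d\theta$-coefficient is taken sufficiently large, and the set of admissible coefficient functions is convex. This gives a fibered, convex, hence contractible space of supported contact structures sitting over each point of $\mathcal{B}$, and shows the projection to $\mathcal{B}$ has contractible (nonempty) fibers in a way compatible with families.

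\medskip

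The main obstacle I expect is the passage from pointwise/fiberwise convexity to an honest \emph{weak homotopy equivalence}, i.e.\ showing that any family of binding forms parametrized by a sphere $S^k$ admits a compatible family of supported contact structures, and any family over the disk relative to its boundary can be filled. The difficulty is uniformity: the ``sufficiently large'' $d\theta$-coefficient needed for the contact inequality depends on the binding form and must be chosen continuously (indeed smoothly) over the compact parameter space, and the correction must be arranged to vanish to the correct order along $K$ so that $\alpha$ is genuinely smooth and $\alpha/|h|$ stays an ideal Liouville form there --- this is where the collar model $\ol{SK}$ of Proposition \ref{p:lioufields} and the explicit near-$K$ normal form from the proof of Lemma \ref{l:smooth} do the work. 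Once a continuous choice of threshold is fixed on the compact family (possible by compactness and a partition-of-unity/straight-line-homotopy argument, using that the admissible coefficients form a convex cone), the family of contact structures is obtained by the same straight-line interpolation, and relative filling over a disk follows identically. I would record these uniformity estimates as the technical heart and treat the resulting homotopy statements, together with the concluding Gray-stability step, as formal consequences.
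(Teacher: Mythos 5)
Your proposal follows essentially the same route as the paper: embed the supported contact structures into the affine space of binding forms via the uniqueness of binding equations (Remark \ref{r:unique}), deform any compact family of binding forms into contact forms by adding a correction $c\,|h|\,f(|h|)\,d\theta$ with $c$ large and uniform over the family (noting that this does not change $\alpha/|h|$ on the pages, so the resulting contact structures remain supported, and that convexity in $c$ lets the original sphere ride along the homotopy through contact forms), and finish with Gray stability. One small correction: your appeal to Proposition \ref{p:supliouv} for non-emptiness runs backwards --- that proposition starts from a contact structure supported by the smooth open book and produces \emph{some} Liouville open book structure, not necessarily the given one $(K,\theta,\omega_t)$ --- but non-emptiness follows anyway from your main mechanism, since a binding form exists by Definition \ref{d:lob} and becomes a contact binding form after the large-$c$ correction.
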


Note that the symplectic orientation of the pages, together with their natural
coorientation, determines an orientation of the ambient manifold. It is implicit
in this statement that the supported contact structures are positive for this
orientation. 

\begin{proof}
Let $V$ be the ambient closed $2n+1$-manifold, $(K,\theta,\omega_t)$ a Liouville
open book in $V$ and $h \map V \to \C$ a map defining $(K,\theta)$. For any
binding form $\beta$ on $V$ (associated with $h$), we can find an $\eps>0$ such
that $\beta$ induces a contact form on every fiber $K_w := \{h=w\}$ with $|w|
\le \eps$. We fix a non-decreasing function $f \map \R_{\ge0} \to \R$ such that
$f(x) = x$ for $x \le \eps/2$ and $f(x) = 1$ for $x \ge \eps$. Then, for $c \ge
0$, we define
$$ \beta_c := \beta + c\, |h|\, f(|h|)\, d\theta. $$
Clearly, $\beta_c/|h|$ coincides with $\beta/|h|$ on every page. Therefore, if
$\beta_c$ is a contact form, the contact structure it defines is symplectically
supported by our Liouville open book. We claim that $\beta_c$ is a contact form
for all sufficiently large $c$, and in fact for all $c \ge 0$ if $\beta$ itself
is already a contact form. To see this, we set $r := |h|$ and we write 
\begin{multline*}
   \beta_c \wedge (d\beta_c)^n
 = nc rf'(r)\, dr \wedge d\theta \wedge \beta \wedge (d\beta)^{n-1} \\
 + cf(r)\, d\theta \wedge (r\, d\beta + n \beta \wedge dr) \wedge (d\beta)^{n-1}
 + \beta \wedge (d\beta)^n.
\end{multline*} 
Since $\beta$ induces a contact form on each fiber $K_w$ of $h$ with $|w| \le
\eps$, the term $rf'(r)\, dr \wedge d\theta \wedge \beta \wedge (d\beta)^{n-1}$
is a positive volume form provided $f'(r) \ne 0$. On the other hand, for all
$r>0$,
$$ f(r)\, d\theta \wedge (r\, d\beta + n \beta \wedge dr) \wedge (d\beta)^{n-1}
 = r^{n+1} f(r)\, d\theta \wedge \bigl(d(\beta/r)\bigr)^n $$ 
is also a positive volume form. The claim follows.

Now consider a $k$-sphere $\xi_s$, $s \in \S^k$, of contact structures supported
by the Liouville open book $(K,\theta,\omega_t)$. By Remark \ref{r:unique},
every contact structure $\xi_s$ has a unique binding equation $\alpha_s$, and
the forms $\alpha_s$, $s \in \S^k$, depend continuously on $s$. Since binding
forms (associated with a fixed $h$) constitute an affine space, we can find a
$(k+1)$-disk of binding forms $\beta_s$, $s \in \D^{k+1}$, such that $\beta_s =
\alpha_s$ for all $s \in \S^k = \del\D^{k+1}$. We choose $\eps>0$ small enough
that each $\beta_s$, $s \in \D^{k+1}$, induces a contact form on all fibers
$K_w$ with $|w| \le \eps$, and we apply our claim twice:
\begin{itemize}
\item
For some $c_0$ sufficiently large, the forms
$$ \beta_{s,c_0} := \beta_s + c_0 |h|\, f(|h|)\, d\theta, \quad
   s \in \D^{k+1}, $$
constitute a $(k+1)$-disk of contact forms.
\item
For the same value $c_0$, the forms 
$$ \alpha_{s,c} := \alpha_s + c\, |h|\, f(|h|)\, d\theta, \quad
   s \in \S^k, \ c \in [0,c_0], $$
constitute a homotopy of $k$-spheres of contact forms between the original
$k$-sphere $\alpha_s = \alpha_{s,0}$, $s \in \S^k$, and the $k$-sphere
$\alpha_{s,c_0} = \beta_{s,c_0}$, $s \in \S^k$, which bounds a $(k+1)$-disk of
contact forms.
\end{itemize}
Since all these contact forms are binding forms, all the contact structures they
define are supported by the Liouville open book $(K,\theta,\omega_t)$, and so
our argument shows that the $k$-sphere $\xi_s$, $s \in \S^k$, is nulhomotopic in
the space of contact structures supported by $(K,\theta,\omega_t)$.
\end{proof}


\begin{thebibliography}{MMM}
\frenchspacing

\bibitem[BEE]{BEE}
F.~\textsc{Bourgeois}, T.~\textsc{Ekholm} and Y.\textsc{Eliashberg} ---
Effect of Legendrian surgery. \
\textit{Geom. Topol.} \textbf{16} (2012), 301--389.

\bibitem[CE]{CE}
K.~\textsc{Cieliebak} and Y.~\textsc{Eliashberg} ---
\textit{From Stein to Weinstein and Back}. \  
Amer. Math. Soc. Colloq. Publ. \textbf{59}, Amer. Math. Soc. 2012.

\bibitem[CFH]{CFH}
K.~\textsc{Cieliebak}, A.~\textsc{Floer} and H.~\textsc{Hofer} ---
Symplectic homology, II: A general construction. \ 
\textit{Math. Z.} \textbf{218} (1995), 103--122. 

\bibitem[Co1]{Co1}
S.~\textsc{Courte} ---
Contact manifolds with symplectomorphic symplectizations. \ 
\textit{Geom. Topol.} \textbf{18} (2014), 1--15.

\bibitem[Co2]{Co2}
S.~\textsc{Courte} ---
\textit{h-Cobordismes en G\'eom\'etrie Symplectique}. \ 
Th\`ese de Doctorat, \'Ecole Normale Sup\'erieure de Lyon, juin 2015.

\bibitem[Do]{Do}
S.~\textsc{Donaldson} ---
Symplectic submanifolds and almost-complex geometry. \ 
\textit{J. Diff. Geom.} \textbf{44} (1996), 666--705. 

\bibitem[EG]{EG}
Y.~\textsc{Eliashberg} and M.~\textsc{Gromov} --- 
Convex symplectic manifolds. \ 
\textit{Several Complex Variables and Complex Geometry}, part~2, 135--162,
Proc. Sympos. Pure Math. \textbf{52}, Amer. Math. Soc. 1991.

\bibitem[EKP]{EKP}
Y.~\textsc{Eliashberg}, S.~S.~\textsc{Kim} and L.~\textsc{Polterovich} ---
Geometry of contact transformations and domains: Orderability versus squeezing.  \ 
\textit{Geom. Topol.} \textbf{10} (2006), 1635--1747.

\bibitem[Ge]{Ge}
H.~\textsc{Geiges} ---
Symplectic manifolds with disconnected boundary of contact type. \ 
\textit{Internat. Math. Res. Notices} (1994), 23--30.

\bibitem[Gi1]{Gi1}
E.~\textsc{Giroux} ---
Convexit\'e en topologie de contact. \ 
\textit{Comment. Math. Helv.} \textbf{66} (1991), 637--677.

\bibitem[Gi2]{Gi2}
E.~\textsc{Giroux} ---
G\'eom\'etrie de contact~: de la dimension trois vers les dimensions sup\'erieures. \ 
\textit{Proceedings of the International Congress of Mathematicians}, vol.~2,
405--414, Higher Ed. Press 2002.

\bibitem[La]{La}
F.~\textsc{Laudenbach} ---
Engouffrement symplectique et intersections lagrangiennes. \ 
\textit{Comment. Math. Helv.} \textbf{70} (1995), 558--614.

\bibitem[MNW]{MNW}
P.~\textsc{Massot}, K.~\textsc{Niederkr\"uger} and C.~\textsc{Wendl} ---
Weak and strong fillability of higher dimensional contact manifolds. \ 
\textit{Invent. Math.} \textbf{192} (2013), 287--373.

\bibitem[Mc]{Mc}
D.~\textsc{McDuff} ---
Symplectic manifolds with contact type boundaries. \ 
\textit{Invent. Math.} \textbf{103} (1991), 651--671.

\bibitem[Ro]{Ro}
R.~\textsc{Roussarie} ---
Private communication by e-mail, July 2017.

\bibitem[Se]{Se}
P.~\textsc{Seidel} ---
A biased view of symplectic cohomology. \ 
\textit{Current Developments in Mathematics}, 211--253, Int. Press 2008.

\bibitem[TW]{TW}
W.~\textsc{Thurston} and H.~\textsc{Winkelnkemper} ---
On the existence of contact forms. \ 
\textit{Proc. Amer. Math. Soc.} \textbf{52} (1975), 345--347. 

\bibitem[Vi]{Vi}
C.~\textsc{Viterbo} ---
Functors and computations in Floer homology with applications, I. \ 
\textit{Geom. Funct. Anal.} \textbf{9} (1999), 985--1033.

\bibitem[We]{We}
A.~\textsc{Weinstein} ---
On the hypotheses of Rabinowitz' periodic orbit theorems. \ 
\textit{J. Differential Equations} \textbf{33} (1979), 353--358.

\end{thebibliography}
\end{document}